\newcommand{\g}{\mathfrak{g}}
\newcommand{\dd}{\mathfrak{d}}
\newcommand{\bone}{\mathbf{1}}
\newcommand{\MC}{\operatorname{MC}}
\newtheorem{theorem}{Theorem}[section]
\newtheorem{corollary}[theorem]{Corollary}
\newtheorem{lemma}[theorem]{Lemma}
\newtheorem{proposition}[theorem]{Proposition}
\theoremstyle{definition}
\newtheorem{assumption}[theorem]{Assumption}
\newtheorem{remark}[theorem]{Remark}
\DeclareMathAlphabet{\mathscrbf}{OMS}{mdugm}{b}{n}
\numberwithin{equation}{section}
\newcommand{\dotms}{\dotsm}
\newcommand{\bx}{{\bf x}}
\newcommand{\bw}{{\bf w}}
\newcommand{\eps}{\varepsilon}
\newcommand{\R}{\mathbb{R}}
\newcommand{\D}{{\mathbb{D}}}
\newcommand{\E}{\mathbb{E}}
\newcommand{\bX}{{\bf X}}
\newcommand{\weakarrow}{{\overset{(d)}{\Longrightarrow}}}
\newcommand{\PR}{\mathbb{P}}
\newcommand{\G}{{\mathcal{G}}}
\newcommand{\cC}{{\mathcal{C}}}
\newcommand{\LL}{\mathcal{L}}
\newcommand{\crit}{{\tt crit}}
\newcommand{\Exp}{\textup{Exp}}
\newcommand{\ER}{\textup{ER}}
\title[Fluctuation of the giant via BFW]{Fluctuations of the giant of Poisson random graphs}
\author{David Clancy, Jr.}
 \thanks{\noindent University of Wisconsin, Department of Mathematics. Email: dclancy@math.wisc.edu}
\date{\today}
\begin{document}

\maketitle

\begin{abstract}
  Enriquez, Faraud, and Lemaire (2023) have established process-level fluctuations for the giant of the dynamic Erd\H{o}s-R\'{e}nyi random graph above criticality and show that the limit is a centered Gaussian process with continuous sample paths. A random walk proof was recently obtained by Corujo, Limic and Lemaire (2024). We show that a similar result holds for rank-one inhomogeneous models whenever the empirical weight distribution converges to a limit and its second moment converges as well.
\end{abstract}

\noindent\textbf{Keywords.} Erd\H{o}s-R\'enyi random graph, rank-1 random graphs, functional central limit theorem.

\noindent\textbf{AMS MSC 2020.} Primary 60F17. Secondary 05C80.

\section{Introduction}

Consider the Erd\H{o}s-R\'enyi random graph $(\G_n^\ER(\lambda);\lambda\ge 0)$ where each of the $\binom{n}{2}$ many edges is included independently with probability $1-e^{-\lambda/n}$. We will think of this graph as dynamic, with each edge $e = \{i,j\}$, $i<j$, appearing in $\G^\ER_n(\lambda)$ if and only if $E_{i,j}\le \lambda/n$ where $E_{i,j}\sim \Exp(1)$ are a family of independent exponential random variables. It is well-known \cite{ER.60} that the largest connected component $L_n^\ER(\lambda)$ undergoes a phase transition at $\lambda = 1$ where $L_n^\ER(\lambda) = \Theta_\PR(n)$ when $\lambda>1$ while for $\lambda<1$ the $L_n^\ER(\lambda) =O_\PR(\log(n))$. Moreover, above criticality $(\lambda>1)$ it holds that
\begin{equation*}
n^{-1} L_n^\ER(\lambda) \weakarrow \varrho^{\textup{ER}}(\lambda)
\end{equation*}
where $\varrho^{\textup{ER}}(\lambda)$ is the unique solution $x\in(0,1)$ to $1-e^{-\lambda x} = x$. In \cite{Stepanov.70}, Stepanov established a central limit theorem for the fluctuations of $L_n^\ER(\lambda)$ around $\varrho^{\textup{ER}}(\lambda) n$. In a formulation of Pittel \cite{Pittel.90}, for each fixed $\lambda:$
\begin{align}\label{eqn:CLT}
    \frac{L_n^\ER(\lambda) - \varrho^{\textup{ER}}(\lambda)n}{n^{1/2}}\weakarrow \mathscr{N}\left(0,\sigma^2(\lambda)\right) \quad\textup{where}\quad  \sigma^2(\lambda) = \frac{\varrho^{\textup{ER}}(\lambda)(1-\varrho^{\textup{ER}}(\lambda))}{(1-\lambda (1-\varrho^{\textup{ER}}(\lambda)))^2}
\end{align}
where $\mathscr{N}(0,\sigma^2)$ is a centered Gaussian with variance $\sigma^2$.

In \cite{EFL.23}, Enriquez, Faraud, and Lemaire analyze the behaviour of the path $(L_n^\ER(\lambda);\lambda>1)$ by seeing how the small components merge into the unique giant. Therein they show that the dynamics for the giant are approximately the same as those of an explicit stochastic differential equation which they can solve. In particular, they show
\begin{theorem}[Enriquez, Faraud and Lemaire \cite{EFL.23}]\label{thm:EFLthm}
    Let $B$ be a standard Brownian motion and set
    \begin{equation*}
        u(\lambda) = \frac{1}{1-\varrho^{\textup{ER}}(\lambda)}-\lambda \qquad\textup{and}\qquad v(\lambda) = \frac{1}{1-\varrho^{\textup{ER}}(\lambda)}-1 = \frac{\varrho^{\textup{ER}}(\lambda)}{1-\varrho^{\textup{ER}}(\lambda)}.
    \end{equation*}
    Then in the Skorohod space $\D((1,\infty),\R)$
    \begin{equation*}
    \left( \frac{L_n^\ER(\lambda) - \varrho^{\textup{ER}}(\lambda)n}{n^{1/2}};\lambda>1\right)  \weakarrow \left(\frac{1}{u(\lambda)} B(v(\lambda));\lambda >1\right).
    \end{equation*}
\end{theorem}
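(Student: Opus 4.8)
The plan is to represent $L_n^\ER(\lambda)$ as a first-passage functional of a breadth-first exploration walk, to prove a functional CLT for the centred walk, and to transport the fluctuations through the first-passage map; the process-level statement then comes from coupling the explorations across all $\lambda$. Concretely, fix $\lambda>1$ and explore $\G_n^\ER(\lambda)$ in breadth-first order from a uniformly chosen vertex, letting $S_k$ be the number of active vertices once $k$ vertices have been explored. Then $(S_k)$ is a Markov chain,
\[
  S_{k+1}-S_k=-1+\mathrm{Bin}(n-k-S_k,\,1-e^{-\lambda/n}),\qquad S_0=1,
\]
restarted at a fresh neutral vertex whenever it hits $0$, whose excursions above the running minimum enumerate the components. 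With $s_\lambda(\tau):=(1-e^{-\lambda\tau})-\tau$, the differential-equation method gives $n^{-1}S_{\lfloor n\tau\rfloor}\to s_\lambda(\tau)$ uniformly on $[0,\varrho^\ER(\lambda)]$; since $s_\lambda>0$ on $(0,\varrho^\ER(\lambda))$ with $s_\lambda(\varrho^\ER(\lambda))=0$ and $s_\lambda'(\varrho^\ER(\lambda))=\lambda e^{-\lambda\varrho^\ER(\lambda)}-1=-(1-\lambda(1-\varrho^\ER(\lambda)))\neq 0$, the giant is the excursion straddling $\tau=\varrho^\ER(\lambda)$, its right endpoint is $L_n^\ER(\lambda)$, and $n^{-1}L_n^\ER(\lambda)\to\varrho^\ER(\lambda)$. (Only $o_\PR(\sqrt n)$ vertices lie in the short excursions preceding the giant one, so this index shift is invisible at scale $\sqrt n$.)

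The next step is a CLT for the centred walk $\Xi_n(\tau):=n^{-1/2}(S_{\lfloor n\tau\rfloor}-n\,s_\lambda(\tau))$. On the giant excursion one has $\E[S_{k+1}-S_k\mid\F_k]=s_\lambda'(k/n)-\tfrac{\lambda}{\sqrt n}\Xi_n(k/n)+O(n^{-1})$ and $\Var(S_{k+1}-S_k\mid\F_k)=\lambda e^{-\lambda k/n}+O(n^{-1/2})$, so a martingale functional central limit theorem yields $\Xi_n\weakarrow\Xi$, the Gaussian solution of
\[
  d\Xi_\tau=-\lambda\,\Xi_\tau\,d\tau+\sqrt{\lambda e^{-\lambda\tau}}\,dW_\tau,\qquad \Xi_0=0,
\]
equivalently $\Xi_\tau=e^{-\lambda\tau}B(e^{\lambda\tau}-1)$ for a standard Brownian motion $B$.

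To transport this to $L_n^\ER(\lambda)$, use that $S_{L_n^\ER(\lambda)}=0$: expanding $0=n\,s_\lambda(L_n^\ER(\lambda)/n)+\sqrt n\,\Xi_n(L_n^\ER(\lambda)/n)$ about $\tau=\varrho^\ER(\lambda)$ and using the transversality $s_\lambda'(\varrho^\ER(\lambda))\neq 0$ gives
\[
  \frac{L_n^\ER(\lambda)-\varrho^\ER(\lambda)n}{\sqrt n}=-\frac{\Xi_n(\varrho^\ER(\lambda))}{s_\lambda'(\varrho^\ER(\lambda))}+o_\PR(1)\;\weakarrow\;-\frac{\Xi_{\varrho^\ER(\lambda)}}{s_\lambda'(\varrho^\ER(\lambda))}.
\]
Substituting $s_\lambda'(\varrho^\ER(\lambda))=-u(\lambda)(1-\varrho^\ER(\lambda))$ together with the identities $e^{-\lambda\varrho^\ER(\lambda)}=1-\varrho^\ER(\lambda)$ and $e^{\lambda\varrho^\ER(\lambda)}-1=v(\lambda)$ turns the limit into $u(\lambda)^{-1}B(v(\lambda))$ --- the one-dimensional version of the claim, which already recovers \eqref{eqn:CLT}.

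Finally, to upgrade this to convergence in $\D((1,\infty),\R)$ one needs tightness in the Skorohod space together with convergence of the finite-dimensional distributions, whose covariance should be $u(\lambda_i)^{-1}u(\lambda_j)^{-1}v(\lambda_i\wedge\lambda_j)$. Tightness follows from a modulus estimate for the increments of $\lambda\mapsto L_n^\ER(\lambda)$ over short $\lambda$-intervals. For the finite-dimensional laws the plan is to couple the breadth-first explorations of $\G_n^\ER(\lambda)$ for all $\lambda$ on one probability space (through the common clocks $E_{i,j}$), prove a joint functional central limit theorem for the family of centred walks, and apply the first-passage map jointly; the resulting covariance forces the limit to be $u(\lambda)^{-1}B(v(\lambda))$ for a single time-changed Brownian motion $B$ (note $v$ is increasing with $v(1^+)=0$), whose sample paths are continuous. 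The hard part will be precisely this last step: pinning down the joint law of the explorations at distinct values of $\lambda$ and showing that the centred walks assemble into one Gaussian field. The complementary route --- tracking $L_n^\ER(\lambda)$ directly as a pure-jump process, where $dL_n^\ER(\lambda)$ has drift $n^{-1}L_n^\ER(\lambda)\sum_j C_j(\lambda)^2\,d\lambda$ with $C_j(\lambda)$ the sizes of the non-giant components --- runs instead into the need to control the susceptibility $\sum_j C_j(\lambda)^2$ of the subcritical ``dust'' and to show its fluctuations do not perturb the limit beyond what the fluctuation of $L_n^\ER(\lambda)$ already dictates; the breadth-first walk sidesteps this because its exploration is Markovian. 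Throughout, all of the above estimates must be made uniform in $\lambda$ over compact subsets of $(1,\infty)$, and the same scheme applied to the breadth-first walk of a rank-one graph is what one would use for the general result.
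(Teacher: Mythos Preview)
Your fixed-$\lambda$ argument is correct and is essentially the Bollob\'{a}s--Riordan walk proof of the static CLT \eqref{eqn:CLT}. The genuine gap is in the last paragraph: you do not prove the process-level statement in $\D((1,\infty),\R)$, you only outline what would need to be done. The difficulty you identify is real and is not resolved by ``coupling the breadth-first explorations through the common clocks $E_{i,j}$''. The standard BFW $(S_k)$ explores vertices in an order that depends on the realised graph and hence on $\lambda$: for $\lambda_1<\lambda_2$ the vertex sitting at step $k$ in the $\lambda_1$-exploration is generically a different vertex from the one at step $k$ in the $\lambda_2$-exploration, so the increments $S_{k+1}^{(\lambda_1)}-S_k^{(\lambda_1)}$ and $S_{k+1}^{(\lambda_2)}-S_k^{(\lambda_2)}$ are not functions of the same local data, and there is no tractable joint martingale problem to feed into a multivariate martingale FCLT. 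Your tightness sketch (``a modulus estimate for the increments of $\lambda\mapsto L_n^\ER(\lambda)$'') is likewise not carried out; note $L_n^\ER$ is not even monotone in $\lambda$ at the $\sqrt n$ scale, so this needs an argument.

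The paper's proof avoids this obstacle by replacing the graph-dependent BFW by Limic's \emph{simultaneous} breadth-first walk: one draws independent $\xi_j\sim\Exp(1)$ once and for all and sets $X_{n,1}(t)=n^{-1}\sum_j\bone_{[\xi_j\le t]}$, so that for every $\lambda$ the exploration walk is the \emph{single} process $t\mapsto X_{n,1}(\lambda t)-t$, with the longest-excursion length equal in law to $n^{-1}L_n^\ER(\lambda)$ jointly over all $\lambda$. The order in which vertices are visited is now the order of the $\xi_j$'s and does not depend on $\lambda$; the whole family of walks is a deterministic time-change of one empirical process. A single Donsker-type theorem for $n^{1/2}(X_{n,1}(t)-\E X_{n,1}(t))\weakarrow\Psi_1(t)$ (here just the classical empirical-process CLT, or Shorack's weighted version in the rank-one case) then gives the joint convergence for all $\lambda$ at once, and the first-passage/Taylor step you wrote for a fixed $\lambda$ can be done uniformly over compact $\lambda$-intervals. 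This is exactly the missing ingredient in your plan: the right coupling is not of the graph explorations but of the encoding walks, and Limic's construction makes that coupling a triviality.
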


Recently, Corujo, Lemaire, and Limic \cite{CLL.24} established the same result using the ``simultaneous breadth-first walks'' of Limic \cite{Limic.19}. See also \cite{BBS.24}. In this article, we show that the giant of dynamic rank-one random graphs have similar Gaussian fluctuations whenever the weight distribution converges weakly and its second moment converges.

\subsection{Set-up and Main Result}

Consider a length $n$ weight vector $\bw = \bw^{(n)} = (w_1,\dotsm, w_n)$ where each $w_j = w_j^{(n)}>0$. Let $(\G_n(\bw,\lambda);\lambda\ge 0)$ be the random graph on $[n]:=\{1,2,\dotsm,n\}$ where each edge $\{i<j\}$ is included in $\G_n(\bw,\lambda)$ if and only if $E_{ij}\le \lambda/n$ where $(E_{ij}\sim\Exp(w_iw_j);i<j)$ are a collection of independent exponential random variables with respective rates $w_iw_j$.  Observe that
\begin{equation*}
    \PR(i\sim j\textup{ in }\G_n(\bw,\lambda)) = 1-\exp(-\lambda w_iw_j/n).
\end{equation*}
The Erd\H{o}s-R\'{e}nyi random graph corresponds to choice $\bw = (1,1,\dotsm,1)$. 
We call $w_i$ the \textit{weight} of vertex $i$ and for a connected component $\cC\subset \G_n(\bw,\lambda)$ we say its \textit{volume} is $\sum_{j\in \cC} w_j$. 

Letting $W_n$ be the weight of a uniformly chosen vertex in $\G_n(\bw,\lambda)$, we will assume throughout the article the following holds. 
\begin{assumption}\label{ass:1}
There is a random variable $W$ such that
\begin{align*}
    W_n\weakarrow W\qquad\textup{ and }\qquad \E[W_n^2] \to \E[W^2]<\infty. 
\end{align*}
\end{assumption}

Fix a random variable $W$ with finite variance and $\PR(W>0) = 1$. It is well-known, \cite{CL.02,BJR.07,AL.98} that when the degree distribution $W_n\to W$ in the Wasserstein-2 distance then the critical time $\lambda_\crit$ for the emergence of a giant connected component of the above graph occurs at $\lambda_\crit := 1/\E[W^2]$. That is the largest connected component of $\G_n(\bw,\lambda)$ has a positive proportion of all the vertices if and only if $\lambda>\lambda_\crit$. To describe the fluctuations of the giant, define for $p = 0,1$ and $n\ge 1$
\begin{equation}
\label{eqn:varphiDef}\varphi^{(n)}_p(t) = \E[W_n^p(1-e^{-W_nt})] = \sum_{j=1}^n n^{-1} w_j^p (1-e^{-w_jt}).
\end{equation} Let us also define $\theta^{(n)}$, $\varrho^{(n)}$, $\beta^{(n)}$ by
\begin{align}\label{eqn:thetadef}
    &\theta^{(n)}(\lambda) = \inf\{t>0: \varphi_1^{(n)}(\lambda t)-t<0\},\quad\quad \varrho^{(n)}(\lambda)  = \varphi_0^{(n)}(\lambda \theta^{(n)}(\lambda))\\
    \label{eqn:betadef}
    &\beta^{(n)}(\lambda) = 1-\lambda\E[W_n^2 e^{-W_n\lambda \theta^{(n)}(\lambda) }].
\end{align} Similarly, define $\varphi_p, \theta, \varrho,\beta$ for the analogous quantities where $W_n$ is replaced by $W$. Doing so, it is easy to see that $\lambda>\lambda_\crit = 1/\E[W^2]$ if and only if $\theta(\lambda) > 0$ if and only if $\varrho(\lambda)>0$. It is also easy to see that $\theta(\lambda)< \E[W]$, $\varrho(\lambda)< 1$. Observe that for $p = 0,1,$ that $\varphi_p^{(n)}$ are non-decreasing and converge pointwise to $\varphi_p$ under Assumption \ref{ass:1}. Therefore, $\varphi_p^{(n)}\to \varphi_p$ uniformly by Dini's theorem. Similarly, $\theta^{(n)}$ are a non-decreasing and, from the convergence of $\varphi_1^{(n)}$, one can see that $\theta^{(n)}\to \theta$ uniformly as well. From here it follows that $\varrho^{(n)}\to \varrho, \beta^{(n)}\to \beta$ uniformly as well.

Our main theorem is the following. We let $\cC_n(\lambda)$ be the connected component that has the largest volume. We let $L_n(\lambda)$ be the cardinality of $\cC_n(\lambda)$ and we let $V_n(\lambda)$ be its volume:
\begin{equation}\label{eqn:voldef}
    V_n(\lambda) = \sum_{j\in \cC_n(\lambda)} w_j.
\end{equation}
\begin{theorem}\label{thm:NRgiant}
   Under Assumption \ref{ass:1}, 
   \begin{align} \label{eqn:xexpand}
       \left(\left(\frac{L_n(\lambda) - \varrho^{(n)}(\lambda)n}{n^{1/2}}, \frac{V_n(\lambda)-\theta^{(n)}(\lambda)n}{n^{1/2}}\right) ;\lambda>\lambda_\crit\right)\weakarrow \left(\bX(\lambda);\lambda>\lambda_\crit\right)
   \end{align}
   where $\bX$ is the $\R^2$-valued centered, continuous Gaussian process 
   \begin{align*}
   \bX(\lambda) = \left(  
   \Psi_0(\lambda \theta(\lambda)) + \frac{\lambda \varphi_0'(\lambda\theta(\lambda))}{\beta(\lambda)}\Psi_1(\lambda\theta(\lambda)), \frac{1}{\beta(\lambda)}\Psi_1(\lambda \theta(\lambda))\right)
   \end{align*}
   for centered continuous Gaussian processes $\Psi_0,\Psi_1$ with covariance
   \begin{align*}
       \E[\Psi_p(s)\Psi_q(t)] = \E[W^{p+q} e^{-Ws}(1-e^{-Wt})]\qquad\forall s\le t, \textup{ and }p,q\in\{0,1\}.
   \end{align*}
\end{theorem}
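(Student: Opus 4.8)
The plan is to realise the giant via a breadth-first exploration of $\G_n(\bw,\lambda)$ run simultaneously for all $\lambda$, thereby reducing the theorem to a functional central limit theorem for two explicit empirical processes together with a first-order expansion of a fixed-point identity. First I would set up the simultaneous exploration walk. Let $(\sigma_j)_{j\in[n]}$ be i.i.d.\ $\Exp(1)$ and put $T_j:=\sigma_j/w_j$, so the $T_j\sim\Exp(w_j)$ are independent and do not depend on $\lambda$. Since a vertex $j$ lying outside an explored set of total weight $V$ connects to that set in $\G_n(\bw,\lambda)$ with probability $1-\exp(-\lambda w_jV/n)$, one may couple the graphs $(\G_n(\bw,\lambda))_{\lambda>0}$ so that, in the rescaled time $s=\lambda V/n$, vertex $j$ is first reached at $s$-time $T_j$. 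Setting
\[
J_n(s):=\sum_{j=1}^n w_j\mathbf 1[T_j\le s],\qquad K_n(s):=\#\{j\le n:T_j\le s\},\qquad \Phi_n(s;\lambda):=J_n(s)-\tfrac{n}{\lambda}s,
\]
the components of $\G_n(\bw,\lambda)$ are read off from the excursions of $\Phi_n(\cdot;\lambda)$ above its running infimum, a component's volume and cardinality being the corresponding increments of $J_n$ and of $K_n$. As $\E J_n(s)=n\varphi_1^{(n)}(s)$ and $\E K_n(s)=n\varphi_0^{(n)}(s)$, we have $\E\Phi_n(s;\lambda)/n=\varphi_1^{(n)}(s)-s/\lambda$, which for $\lambda>\lambda_\crit$ is positive on $(0,\lambda\theta^{(n)}(\lambda))$ and negative just afterwards, so whp $\cC_n(\lambda)$ is the unique macroscopic excursion. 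The components explored before it have $O_{\PR}(1)$ combined volume (a standard fact: the first root is weight-biased and falls in the giant with asymptotic probability $\theta(\lambda)/\E[W]>0$, and the small components have $O_{\PR}(1)$ susceptibility), so that, uniformly for $\lambda$ in compact subsets of $(\lambda_\crit,\infty)$,
\begin{equation}\label{eq:fprel}
V_n(\lambda)=J_n\!\big(\tfrac{\lambda}{n}V_n(\lambda)\big)+o_{\PR}(\sqrt n),\qquad L_n(\lambda)=K_n\!\big(\tfrac{\lambda}{n}V_n(\lambda)\big)+o_{\PR}(\sqrt n).
\end{equation}
I expect this step to be the main obstacle: upgrading the excursion picture to \eqref{eq:fprel} with $o_{\PR}(\sqrt n)$ error \emph{uniformly in $\lambda$} requires careful control of the components explored ahead of the giant, which is precisely what the simultaneous breadth-first walk machinery of \cite{Limic.19} and \cite{CLL.24} is designed to supply.

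Next I would establish the functional CLT for the centered fluctuation processes $\widehat\Psi^{(n)}_1(s):=n^{-1/2}(J_n(s)-n\varphi_1^{(n)}(s))$ and $\widehat\Psi^{(n)}_0(s):=n^{-1/2}(K_n(s)-n\varphi_0^{(n)}(s))$, i.e.\ $\widehat\Psi^{(n)}_p(s)=n^{-1/2}\sum_{j=1}^n w_j^p(\mathbf 1[T_j\le s]-(1-e^{-w_js}))$, a normalized sum of independent centered c\`adl\`ag processes. Finite-dimensional convergence to a centered Gaussian law follows from the Lindeberg CLT, with limiting covariance $\lim_n n^{-1}\sum_j w_j^{p+q}\cov(\mathbf 1[T_j\le s],\mathbf 1[T_j\le t])$, which under Assumption~\ref{ass:1} equals the covariance appearing in the statement; the Lindeberg condition holds because $\E[W_n^2]\to\E[W^2]$ forces $\max_j w_j=o(\sqrt n)$ together with uniform square-integrability of $W_n$. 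Tightness on $[\delta,T]$ for every $0<\delta<T$ — which is all that is needed, since below the processes are only evaluated at $\lambda\theta(\lambda)>0$ — follows from the standard modulus-of-continuity estimate for such sums, the relevant moments being controlled on $[\delta,T]$ by $\sup_w w^3e^{-w\delta}<\infty$ and $\E[W_n^2]=O(1)$. Hence $(\widehat\Psi^{(n)}_0,\widehat\Psi^{(n)}_1)\weakarrow(\Psi_0,\Psi_1)$ in $\D((0,\infty),\R^2)$.

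Finally I would carry out a first-order expansion of \eqref{eq:fprel}. By the supercritical law of large numbers $V_n(\lambda)/n\to\theta(\lambda)$; writing $V_n(\lambda)=n\theta^{(n)}(\lambda)+\sqrt n\,R_n(\lambda)$ we have $\tfrac{\lambda}{n}V_n(\lambda)=\lambda\theta^{(n)}(\lambda)+o_{\PR}(1)$, hence $\widehat\Psi^{(n)}_p(\tfrac{\lambda}{n}V_n(\lambda))=\widehat\Psi^{(n)}_p(\lambda\theta^{(n)}(\lambda))+o_{\PR}(1)$ by asymptotic equicontinuity. Substituting $J_n(s)=n\varphi_1^{(n)}(s)+\sqrt n\widehat\Psi^{(n)}_1(s)$ into the first identity of \eqref{eq:fprel}, Taylor-expanding $\varphi_1^{(n)}$ at $\lambda\theta^{(n)}(\lambda)$ (where $n\varphi_1^{(n)}(\lambda\theta^{(n)}(\lambda))=n\theta^{(n)}(\lambda)$ and the second derivative is bounded), using $\beta^{(n)}(\lambda)=1-\lambda(\varphi_1^{(n)})'(\lambda\theta^{(n)}(\lambda))$ and a short bootstrap to upgrade $R_n=o_{\PR}(\sqrt n)$ to $R_n=O_{\PR}(1)$, one obtains
\[
\frac{V_n(\lambda)-n\theta^{(n)}(\lambda)}{\sqrt n}=\frac{1}{\beta^{(n)}(\lambda)}\,\widehat\Psi^{(n)}_1\!\big(\lambda\theta^{(n)}(\lambda)\big)+o_{\PR}(1).
\]
Inserting this into the second identity of \eqref{eq:fprel} and expanding $\varphi_0^{(n)}$ in the same way (with $\varphi_0^{(n)}(\lambda\theta^{(n)}(\lambda))=\varrho^{(n)}(\lambda)$) yields
\[
\frac{L_n(\lambda)-n\varrho^{(n)}(\lambda)}{\sqrt n}=\widehat\Psi^{(n)}_0\!\big(\lambda\theta^{(n)}(\lambda)\big)+\frac{\lambda(\varphi_0^{(n)})'(\lambda\theta^{(n)}(\lambda))}{\beta^{(n)}(\lambda)}\,\widehat\Psi^{(n)}_1\!\big(\lambda\theta^{(n)}(\lambda)\big)+o_{\PR}(1),
\]
all errors being uniform on compact subsets of $(\lambda_\crit,\infty)$. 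Since $\theta^{(n)}\to\theta$, $\beta^{(n)}\to\beta$ and $(\varphi_0^{(n)})'\to\varphi_0'$ uniformly on compacts and $\lambda\mapsto\lambda\theta(\lambda)$ is continuous, composing the tight pair $(\widehat\Psi^{(n)}_0,\widehat\Psi^{(n)}_1)$ with this deterministic time-change and combining with the deterministic coefficients is continuous at the continuous limit $(\Psi_0,\Psi_1)$; the continuous mapping theorem then gives \eqref{eqn:xexpand} with $\bX$ as in the statement, a centered continuous Gaussian process because $\Psi_0$ and $\Psi_1$ are.
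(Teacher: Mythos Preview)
Your proposal is correct and follows essentially the same route as the paper: both use Limic's simultaneous breadth-first walk to encode the giant as an excursion of $t\mapsto X_{n,1}(\lambda t)-t$, establish a joint FCLT for the weighted empirical processes $n^{-1/2}\sum_j w_j^{p}(\mathbf 1[\xi_j\le t]-(1-e^{-w_jt}))$ (the paper invokes Shorack's theorem instead of doing Lindeberg plus tightness by hand), and then Taylor-expand around $\theta^{(n)}(\lambda)$ to read off the limit. The only cosmetic difference is that the paper tracks the excursion endpoints $(\g_n(\lambda),\dd_n(\lambda))$ explicitly and proves $n^{1/2}\g_n\to 0$ and $n^{1/2}(\dd_n-\theta^{(n)})\to\Psi_1(\lambda\theta)/\beta$ uniformly on compacts, whereas you package the same content into the fixed-point relation $V_n=J_n(\lambda V_n/n)+o_{\PR}(\sqrt n)$ and its first-order expansion; the ``main obstacle'' you flag is exactly the content of the paper's Proposition~\ref{prop:GD} through Lemma~\ref{lem:Gnrightedge}.
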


\subsection{On CLTs for Giants}

Before continuing to the proof, let us mention some of the various central limit theorems appearing in the literature for the size or volume of the giant in random graph models.

For the Erd\H{o}s-R\'enyi random graph, the first CLT was obtained by Stepanov in \cite{Stepanov.70}; however, Stepanov's formulation is not quite as it appears in \eqref{eqn:CLT}. The form as it appear in \eqref{eqn:CLT} is due to Pittel \cite{Pittel.90}. A similar formulation, without a direct random graph connection, was obtained by Martin-L\"of in \cite{MartinLof.86}. In \cite{BBF.00}, Barraez, Boucheron, and Fernandez de la Vega used a random walk and the martingale CLT to provide a rate of convergence for the CLT in \eqref{eqn:CLT}. In \cite{Rath.18}, R\'{a}th obtained an explicit generating function for the size of the connected component containing the vertex $1$ and used this to obtain the CLT in \eqref{eqn:CLT}.

In \cite{PW.05}, Pittel and Wormald investigated the trivariate random vector of the number of vertices contained in at least one cycle in the giant (the 2-core), the number vertices not in a cycle of a giant (the tree mantle), and the number surplus edges. Therein they show a trivariate CLT with an explicit covariance matrix. The results of \cite{PW.05} also extend to the barely supercritical regime: $\lambda = \lambda_\crit + \eps_n$ where $n^{1/3}\eps_n\to \infty$. A random walk approach for the size of the giant was obtained by Bollob\'{a}s and Riordan in \cite{BR.12}. In \cite{Puhalskii.05}, Puhalskii obtained a trivariate CLT for the number of connected components, the number of vertices in the giant, and the surplus of the giant for the supercritical random graph. Puhalskii also obtained a CLT for the number of connected components below criticality as well. 

For the dynamic Erd\H{o}s-R\'{e}nyi random graph, as already mentioned there is Theorem \ref{thm:EFLthm} obtained by Enriquez, Faraud and Lemaire \cite{EFL.23}. A new proof, analogous to the method taken in this work, was obtained by Corujo, Lemaire and Limc \cite{CLL.24}. Their approach allows them to also obtain a functional CLT (FCLT) for the number of vertices in the giant in the barely supercritical regime $\lambda = \lambda_\crit + t\eps_n$ for $t>0$ and $n^{1/3}\eps_n\to\infty$. A trivariate FCLT for the number of connected components, the size of the giant, and its surplus was obtained in \cite{BBS.24} by Bhamidi, Budhiraja and Sakanaveeti. The latter extends to finite type stochastic blockmodels and FCLTs for the number of connected components below criticality. This extends the work of Corujo \cite{Corujo.24}, which also allows for inhomogeneous rank 1 models similar to our setting.

For static inhomogeneous models there are also numerous central limit theorems. In \cite{MartinLof.86}, Martin-L\"of obtained a CLT for the number of individuals infected in a generalized stochastic epidemic. Under certain conditions, see e.g. \cite{Neal.03}, this result can be reformulated into a CLT for the giant of a random graph model. In \cite{Seierstad.13}, Seierstad provided fairly general conditions for the number of vertices not contained in a tree for a dynamic random graph processes to satisfy a central limit theorem. Often, such vertices are almost exclusively in the giant. 

Closely related to our results and approach here is the work by Neal in \cite{Neal.07}. Neal obtained a CLT for the number of individuals infected in a ``variable generalized stochastic epidemic.'' In this model, each infected individual $i\in[n]$ will infect a susceptible individual $j\in[n]$ independently with probability $1-\exp(-Q_iD_j/n)$ where $(Q_i,D_i)_{i\in[n]}$ are i.i.d. random variables. One can think of $D_j/n$ as the rate at which an individual $j$ makes contact with every other individual in the population and $Q_i$ is the duration of time that person $i$ is infectious. Essentially, Neal relates the epidemic to the process
\begin{align*}
    X_n^{(Q,D)}(t) = -t+\sum_{j=1}^n Q_j \bone_{[\xi_j\le D_j t]}
\end{align*}
where $\xi_j$ are i.i.d. rate 1 exponential random variables independent of $Q_j,D_j$. This connection with i.i.d. exponential random variables was originally described by Sellke in \cite{Sellke.83}, and is closely related to the encoding of the multiplicative random graph described below by Limic \cite{Limic.19}. However, Limic's construction is dynamic while Sellke's is static. A similar approach to Neal was taken in \cite{Startsev.01}.

While not directly related to our model under consideration here, Barbour and R\"ollin \cite{Barbour.19} established a CLT for the largest connected component of the super-critical configuration model. Janson \cite{Janson.20} extended this to the configuration model conditioned on simplicity (that is no self-loops nor multiple edges). The asymptotic variance of the giant was obtained by Ball and Neal \cite[Theorem 2.1]{BN.17}. See also \cite{Riordan.12} for the barely supercritical regime.

As mentioned above, \cite{CLL.24} is able to analyze the barely supercritical behavior of the giant for the Erd\H{o}s-R\'{e}nyi. We believe that different assumptions must be placed on the weight vector $\bw^{(n)}$ to analyze the barely supercritical behavior of $\G_n(\bw,\lambda)$. This is because of the qualitatively different behavior of the near-critical random graphs whenever $W_n$ has finite third moment or infinite third moment \cite{Aldous.97,AL.98}. 
\subsection*{Acknowledgements} The author would like to thank Josu\'e Corujo and Vlada Limic for sharing a note that eventually evolved into the paper \cite{CLL.24} with S. Lemaire. The approach of Donsker's theorem drastically simplified the author's original approach using integral equations. The author would also like to thank Terry Harris for helpful discussions on the inverse and implicit function theorems.

\section{Simultaneous Breadth-first walks}

\subsection{Breadth-first walks and random graphs}

Consider the weighted graph $G^{\MC}(\bx,q)$ consisting of vertices $1,\dotsm,n$ where each vertex $i$ has weight $x_i>0$ and each edge is included independently with probability $1-e^{-x_ix_jq}$. We make a small change in parametrization to better match \cite{Limic.19}. We let $V_n^\circ(\bx,q)$ denote the maximal volume of any connected component connected component:
\begin{equation*}
    V_n^\circ(\bx,q) = \max_{C\subset \G_n(\bx,q)} \sum_{i\in C} x_i
\end{equation*} where the maximum is taken over all connected components of $G^{\MC}(\bx,q)$. In terms of the multiplicative coalescent of Aldous \cite{Aldous.97}, this is simply the mass of the largest block at time $q$. 

We make use of the following theorem of Limic \cite{Limic.19}, which requires a little notation. We say that $(g,d)$ is an excursion interval of a function $f(t)$ (with no negative jumps and null at zero) if
\begin{equation*}
    f(g-) = \inf_{s\le g}f(s)=f(d) \qquad \textup{and}\qquad f(t-) > f(g-)\textup{ for all }t\in(g,d).
\end{equation*} In particular, there is some $y$ such that $g = \inf\{t:\inf_{s\le t}f(s) = -y\}$ while $d = \inf\{t:f(t)<-y\}$.
Given a function $f$, let $\LL(f)$ denote the length of the longest excursion interval of the function $f$. If there are multiple longest excursions, $\LL$ with the length of the first one. 

\begin{theorem}[Limic {\cite{Limic.19}}]\label{thm:limic}
Fix $\bx = (x_1,\dotms,x_n)$. Let $\xi^\circ_1,\dotsm,\xi^\circ_n$ be independent exponential random variables with $\xi_j^\circ\sim \Exp(x_j)$ (that is rate $x_j$ and mean $1/x_j$). Define
    \begin{equation*}
        Z_n(t,q;\bx) = \sum_{j=1}^n x_j \bone_{[\xi^\circ_j\le q t]} - t.
    \end{equation*}
    Then $\left(\LL(t\mapsto Z_n(t,q;\bx));q > 0\right) \overset{d}{=} \left(V^\circ_n(\bx,q);q >0\right).
   $
\end{theorem}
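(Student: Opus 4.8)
The plan is to derive Theorem~\ref{thm:limic} from a static ``graph exploration'' identity and then a standard breadth-first-walk bookkeeping argument, run simultaneously across all $q$. First I would fix the realization of the independent exponentials $\xi_1^\circ,\dots,\xi_n^\circ$ and, for a fixed $q>0$, note that vertex $j$ is ``reached early'' relative to the threshold $qt$ exactly when $\xi_j^\circ\le qt$; the random set $\{j:\xi_j^\circ\le q\}$ has the law of a random subset in which $j$ is included independently with probability $1-e^{-x_j q}$. This is not yet the edge set of $G^{\MC}(\bx,q)$, so the real content is the classical observation (going back to Sellke \cite{Sellke.83}, and used by Limic \cite{Limic.19} and Neal \cite{Neal.07}) that one can \emph{reveal edges sequentially during a breadth-first exploration} in such a way that, conditionally on the history, the next vertex to be added is found by comparing a single fresh exponential clock of the appropriate rate against the elapsed ``time'' $qt$, and the clocks can be coupled to be precisely the $\xi_j^\circ$. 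Concretely: explore a component by maintaining a stack of ``active'' vertices; when the stack is processed, the amount of exploration-time consumed to absorb vertex $j$ into the current cluster, given that it is absorbed, is governed by $\xi_j^\circ/q$ via the memorylessness of the exponential, and a vertex is absorbed iff $\xi_j^\circ\le q\cdot(\text{current time})$. I would make this coupling precise as the first lemma.

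Second I would set up the breadth-first walk itself. Order the vertices in the order in which the exploration reaches them; let $Z_n(t,q;\bx)=\sum_j x_j\bone_{[\xi_j^\circ\le qt]}-t$ be the net ``volume discovered minus time elapsed.'' The key structural fact to verify is that the excursions of $t\mapsto Z_n(t,q;\bx)$ above its running infimum are in bijection with the connected components of $G^{\MC}(\bx,q)$, and that the \emph{length} of the excursion corresponding to a component $C$ equals $\sum_{i\in C}x_i$, its volume. This is the usual depth/breadth-first-walk correspondence: each time the walk hits a new infimum we have finished a component and start a new one by ``planting'' the next unexplored vertex; the horizontal extent of an excursion records exactly the total weight of the vertices absorbed during that exploration, because the drift $-t$ contributes length and each absorbed vertex $j$ contributes an upward jump of height $x_j$, so the component closes precisely when accumulated weight $=$ accumulated time. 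Hence $\LL(t\mapsto Z_n(t,q;\bx))$, the length of the longest excursion, equals $\max_C\sum_{i\in C}x_i=V_n^\circ(\bx,q)$, for each fixed $q$.

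Third, I would upgrade the fixed-$q$ identity to the process-level identity in $q$. The point is that the \emph{same} exponentials $\xi_j^\circ$ drive all $q$ simultaneously: increasing $q$ only makes more vertices satisfy $\xi_j^\circ\le qt$, so the family of graphs $(G^{\MC}(\bx,q))_q$ is the natural monotone coupling, and the coupling in the first step can be done consistently across $q$ (the exploration at level $q$ uses the clocks $\xi_j^\circ$ rescaled by $q$, with no resampling). Therefore the equality $\LL(t\mapsto Z_n(t,q;\bx))=V_n^\circ(\bx,q)$ holds simultaneously for all $q>0$ on this coupling, which gives the claimed equality in distribution of the two processes indexed by $q$.

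The main obstacle is making the sequential-revelation coupling in the first step genuinely correct rather than heuristic: one must check that, during the breadth-first exploration, the conditional law of which not-yet-seen vertices attach to the vertex currently being processed is exactly ``each remaining $j$ attaches independently w.p.\ $1-e^{-x_ix_j q}$,'' and that this can be realized by a single exponential clock per vertex (rather than one per potential edge) whose law does not depend on the identity of the processing vertex up to the time rescaling. The clean way is the Sellke-type device: assign vertex $j$ the clock $\xi_j^\circ\sim\Exp(x_j)$ and declare $j$ attached to the cluster being grown as soon as the running total of $x_i$ over already-absorbed vertices $i$ times $q$ exceeds $\xi_j^\circ$; one then verifies by the thinning property of exponentials that the resulting edge set has exactly the law of $G^{\MC}(\bx,q)$ restricted to each discovered component, and that no information about undiscovered components is used, so the law of the full partition into components is correct. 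Everything else is the deterministic walk--to--graph dictionary, which is routine. For full rigor one would also note that only finitely many vertices are present, so there are no issues with the infima, excursion lengths, or the tie-breaking convention in the definition of $\LL$.
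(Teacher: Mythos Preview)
The paper does not give its own proof of this theorem; it is stated as a result of Limic \cite{Limic.19} and used as a black box to derive Corollary~\ref{cor:giant}. So there is no paper proof to compare against, only the question of whether your outline would succeed as a proof.

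Your steps 1--2 (the Sellke-type coupling and the excursion/component dictionary for a \emph{fixed} $q$) are essentially correct and standard: the vertex-clock construction does produce, for each fixed $q$, a partition into blocks whose law matches the component partition of $G^{\MC}(\bx,q)$, and the excursion lengths of $Z_n(\cdot,q;\bx)$ are exactly the block volumes.

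The gap is in step 3. You write that ``the same exponentials $\xi_j^\circ$ drive all $q$ simultaneously \dots\ so the family of graphs $(G^{\MC}(\bx,q))_q$ is the natural monotone coupling, and the coupling in the first step can be done consistently across $q$,'' and conclude that the equality holds simultaneously for all $q$ on this coupling. This conflates two different probability spaces: the graphs $G^{\MC}(\bx,q)$ are coupled via one exponential \emph{per edge} ($E_{ij}$), whereas the walk uses one exponential \emph{per vertex} ($\xi_j^\circ$). What you have actually shown is that for each fixed $q$ the marginal laws agree; ``same randomness on the walk side'' plus ``marginals match'' does not yield equality of finite-dimensional distributions in $q$. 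In particular, there is no reason the Sellke partition at levels $q_1<q_2$ should be realizable as the component partition of a \emph{single} edge-coupled graph process at those two levels, so a samplewise equality $\LL=V_n^\circ$ for all $q$ on one probability space is not available by this route.

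What Limic actually does is different and is the real content of the theorem: she shows that the vector of excursion lengths of $Z_n(\cdot,q;\bx)$, viewed as a process in $q$, evolves as a multiplicative coalescent (two excursions of lengths $a,b$ merge at rate $ab$), by analyzing when adjacent excursions of the walk concatenate as $q$ increases. Since $(V_n^\circ(\bx,q);q>0)$ is the largest block of the multiplicative coalescent by construction, and both start from the same initial state, equality in law of the processes follows. Your outline would need to replace step 3 with this dynamical identification; the ``consistent coupling'' shortcut does not work.
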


We now consider the dynamic Poisson random graph $\G_n(\bw,\lambda)$ where each edge $e = \{i,j\}$ is included independently with probability $1-\exp(-\lambda w_iw_j /n)$. Let us denote by $W_n$ the weight of a uniformly chosen vertex in $\G_n(\bw,\lambda)$. Let $(\xi_j;j=1,\dotsm,n)$ be independent $\Exp(w_j)$ random variables and define the processes $X_{n,p} = (X_{n,p}(t);t\ge 0)$ by 
\begin{equation}\label{eqn:xrep}
    X_{n,p}(t) = \sum_{j=1}^n n^{-1}w_j^p\bone_{[\xi_j\le t]}.
\end{equation} The choice of scaling is for a matter of convenience for later scaling limits and note that by setting $\xi_j^\circ = n^{2/3}\xi_j$ and $x_j = n^{-2/3}w_j$ 
\begin{equation*}
    n^{-1/3} Z_n(n^{1/3} t,\lambda; n^{-2/3} \bw) = X_{n,1}(\lambda t) - t.
\end{equation*}
We get the following corollary of Theorem \ref{thm:limic}.
\begin{corollary}\label{cor:giant}
    Let $(\g_n(\lambda),\dd_n(\lambda))$ denote the longest excursion interval of $t\mapsto X_{n,1}(\lambda t)-t$. Then
    \begin{align*}
       & \left(\left(n^{-1}V_n(\lambda), n^{-1}L_n(\lambda)\right);\lambda>0\right) \overset{d}{=} \left(\dd_n(\lambda)-\g_n(\lambda), X_{n,0}(\lambda \dd_n(\lambda)) - X_{n,0}(\lambda \g_n(\lambda)) + n^{-1};\lambda >0\right)
    \end{align*} where $V_n$ (resp. $L_n$) are the volume (resp. cardinality) of the most voluminous component of $\G_n(\bw,\lambda)$.
\end{corollary}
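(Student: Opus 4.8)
The plan is to read both coordinates off the simultaneous breadth-first walk underlying Theorem~\ref{thm:limic}. Since that theorem records only the \emph{length} of the longest excursion, which matches the maximal \emph{volume}, one must open up a little of Limic's construction to recover the \emph{cardinality} $L_n$, and that bookkeeping is where essentially all the work sits. First I would fix the change of variables: with $x_j = n^{-2/3}w_j$ and $q = n^{1/3}\lambda$ one has $x_ix_jq = \lambda w_iw_j/n$, so $G^{\MC}(n^{-2/3}\bw,\,n^{1/3}\lambda)$ is exactly $\G_n(\bw,\lambda)$, and with $\xi_j^\circ = n^{2/3}\xi_j$, $\xi_j\sim\Exp(w_j)$, the scaling identity recorded before the statement exhibits $s\mapsto X_{n,1}(\lambda s)-s$ as a fixed, strictly increasing time change together with a positive spatial dilation of $t\mapsto Z_n(t,\,n^{1/3}\lambda;\,n^{-2/3}\bw)$. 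Such a transformation carries the excursion intervals (above running minima) of one process bijectively and length-monotonically onto those of the other, the longest going to the longest, simultaneously in $\lambda$.

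Next I would invoke the excursion/component dictionary built into the proof of Theorem~\ref{thm:limic} \cite{Limic.19}: the clocks $(\xi_j)_{j\le n}$ couple with a version of the graph process $(\G_n(\bw,\lambda);\lambda>0)$ so that, for each $\lambda$, the excursions of $s\mapsto X_{n,1}(\lambda s)-s$ are in bijection with the connected components of $\G_n(\bw,\lambda)$; a component $C$ is explored over one interval $(g,d)$, the upward jumps of the walk in $[g,d)$ are indexed exactly by the vertices of $C$ (the jump at the left endpoint $g$ corresponding to the root vertex $j_0$ that opens the excursion), and the jump indexed by $j\in C$ has height $n^{-1}w_j$ and sits at $s = \xi_j/\lambda$. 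The first coordinate is then immediate: the longest excursion is produced by the component of largest volume $\cC_n(\lambda)$, and writing $(\g_n(\lambda),\dd_n(\lambda))$ for its interval, $\dd_n(\lambda)-\g_n(\lambda) = n^{-1}\sum_{j\in\cC_n(\lambda)}w_j = n^{-1}V_n(\lambda)$, which for the longest excursion is the content of Theorem~\ref{thm:limic} itself.

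For the second coordinate I would count jump times: $L_n(\lambda) = |\cC_n(\lambda)|$ equals the number of jumps of $s\mapsto X_{n,1}(\lambda s)-s$ in $[\g_n(\lambda),\dd_n(\lambda))$, i.e.\ $\#\{j:\xi_j\in[\lambda\g_n(\lambda),\lambda\dd_n(\lambda))\}$. Since the left endpoint is the genuine jump time $\lambda\g_n(\lambda) = \xi_{j_0}$ while almost surely no clock equals $\lambda\dd_n(\lambda)$, this count equals $\#\{j:\xi_j\in(\lambda\g_n(\lambda),\lambda\dd_n(\lambda)]\}+1 = n\bigl(X_{n,0}(\lambda\dd_n(\lambda))-X_{n,0}(\lambda\g_n(\lambda))\bigr)+1$, which is precisely $n^{-1}L_n(\lambda) = X_{n,0}(\lambda\dd_n(\lambda))-X_{n,0}(\lambda\g_n(\lambda))+n^{-1}$. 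Because the coupling realizes the correct law of $(\G_n(\bw,\lambda);\lambda>0)$, the asserted identity in distribution follows. I expect the main obstacle to be exactly this endpoint accounting for the extra $n^{-1}$, together with making the excursion/component correspondence precise uniformly in $\lambda$ and fixing a consistent tie-break when the largest component (equivalently the longest excursion) is not unique, which one handles by letting $\cC_n(\lambda)$ be the first such component explored, matching the convention already built into $\LL$.
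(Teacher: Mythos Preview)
Your proposal is correct and follows essentially the same route as the paper: both deduce the volume identity from the scaling $n^{-1/3}Z_n(n^{1/3}t,\lambda;n^{-2/3}\bw)=X_{n,1}(\lambda t)-t$ together with Theorem~\ref{thm:limic}, and both obtain the cardinality by counting the $\xi_j$ falling in the longest excursion interval, converting $X_{n,0}(\lambda\g_n(\lambda)-)$ to $X_{n,0}(\lambda\g_n(\lambda))-n^{-1}$ via the single jump at the left endpoint. Your write-up is in fact more careful than the paper's about the endpoint bookkeeping and the tie-breaking convention.
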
 
\begin{proof}
    The statement of the volume is direct consequence of the remarks preceding the statement of the corollary. For the cardinality note that weights $w_j$ contributing to the largest excursion are precisely those $w_j$ such that 
    \begin{equation*}
        \lambda \g_n(\lambda)\le  \xi_j \le \lambda \dd_n(\lambda).
    \end{equation*} The number of such $j$ is given by 
    \begin{align*}
        \sum_{j: \lambda \g_n(\lambda)\le \xi_j\le \lambda\dd_n(\lambda)}& 1 = n\left( X_{n,0}(\lambda \dd_n(\lambda)) - X_{n,0}(\lambda \g_n(\lambda)-)\right) = n\left( X_{n,0}(\lambda \dd_n(\lambda)) - X_{n,0}(\lambda \g_n(\lambda))+1\right)
    \end{align*}
    as desired.
\end{proof}

\subsection{Weighted Empirical Processes}

Let us recall a result of Shorack \cite{Shorack.79} concerning weighted empirical processes for triangular arrays. To start with, let $(\beta_{j,n} ;j\in [n],n\ge 1)$ be a triangular array of row-independent random variables with values in $\R_+$ and with respective cumulative distribution functions $G_{j,n}(t)$. Let $c_{j,n}$ be (deterministic) weights and define
\begin{align*}
    F_n(t) = n^{-1} \sum_{j=1}^n c_{j,n}\bone_{[\beta_{j,n}\le t]} \qquad \textup{and}\qquad \overline{F}_n(t) = n^{-1} \sum_{j=1}^n c_{j,n}G_{j,n}(t)
\end{align*}
as the weighted empirical process and its expectation, respectively. Lastly, for a non-decreasing function $f$ we write $f(\infty) = \lim_{t\to\infty} f(t)$. We can now state a theorem of Shorack \cite{Shorack.79}.
\begin{theorem}[Convergence of weighted empirical processes]\label{thm:Shorack}
    Consider the above set-up. Further suppose
    \begin{enumerate}
        \item[\textbf{(i)}]  For each $t\in [0,\infty]$,
        $ n^{-1}\sum_{j=1}^n c_{j,n}^2 G_{j,n}(t) \to \nu_{\infty}(t)
        $
        for some non-decreasing function $\nu_\infty(t)$.
        \item[\textbf{(ii)}] As $n\to\infty$ it holds that $\max_{j\in[n]} c_{j,n}^2 = o(n)$.
        \item[\textbf{(iii)}] There is some limit $K:\R_+^2\to\R$ such that the weighted covariance
        \begin{align*}
            K_n(s,t) := n^{-1}\sum_{j=1}^n c_{j,n}^2 \left(G_{j,n}(s\wedge t) - G_{j,n}(s)G_{j,n}(t)\right) \to K(s,t)
        \end{align*}
        for all $s,t\in \R_+$.
    \end{enumerate}
    Then in the Skorohod $J_1$ topology
    \begin{align*}
       \left( n^{1/2}\left(F_n(t)-\overline{F}_n(t)\right);t\ge 0\right)\weakarrow (\Psi(t);t\ge 0)
    \end{align*}
    for a centered Gaussian process with continuous sample paths and covariance function $K(s,t)$.
\end{theorem}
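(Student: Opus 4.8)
The plan is to establish the convergence by the standard two–step route: convergence of the finite-dimensional distributions together with tightness in the Skorohod $J_1$ topology. Write $\eta_{j,n}(t) = n^{-1/2}c_{j,n}\big(\bone_{[\beta_{j,n}\le t]} - G_{j,n}(t)\big)$, so that $n^{1/2}(F_n(t)-\overline F_n(t)) = \sum_{j=1}^n \eta_{j,n}(t)$ is, for each $n$, a sum of independent centered processes. The jumps of each summand of $\sum_j \eta_{j,n}$ have size at most $n^{-1/2}c_{j,n} \le n^{-1/2}\max_j c_{j,n}\to 0$ by \textbf{(ii)} (and in the applications of interest the $G_{j,n}$ are continuous, so $n^{1/2}\overline F_n$ contributes no jumps at all); consequently any $J_1$-subsequential limit has a.s.\ continuous paths, and once the two steps below are done the limit is automatically the continuous centered Gaussian process whose covariance we read off from the finite-dimensional step.

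\emph{Finite-dimensional distributions.} Fix $0\le t_1<\dots<t_k\le\infty$ and, via the Cram\'er--Wold device, apply the Lindeberg--Feller central limit theorem to the row-independent triangular array $\big((\eta_{j,n}(t_1),\dots,\eta_{j,n}(t_k))\big)_j$. From $\cov(\bone_{[\beta\le s]},\bone_{[\beta\le t]}) = G(s\wedge t)-G(s)G(t)$ one gets $\sum_j\E[\eta_{j,n}(s)\eta_{j,n}(t)] = n^{-1}\sum_j c_{j,n}^2\big(G_{j,n}(s\wedge t)-G_{j,n}(s)G_{j,n}(t)\big)\to K(s,t)$ by \textbf{(iii)}. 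The Lindeberg condition is trivial because $|\eta_{j,n}(t)|\le n^{-1/2}\max_j c_{j,n} = o(1)$ by \textbf{(ii)}, so the truncated second moments vanish identically for large $n$. Hence the finite-dimensional laws converge to the centered Gaussian law with covariance $K$.

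\emph{Tightness, and the main obstacle.} For $s\le t$ the centered increment $\Delta_{s,t}:=\sum_j n^{-1/2}c_{j,n}\big(\bone_{[s<\beta_{j,n}\le t]}-p_{j,n}(s,t)\big)$, with $p_{j,n}(s,t):=G_{j,n}(t)-G_{j,n}(s)$, is a sum of independent centered variables bounded by $n^{-1/2}c_{j,n}$; using the elementary bounds $\E[(\bone_A-p)^2]\le p$ and, for disjoint $A,B$, $\E[(\bone_A-p)^2(\bone_B-q)^2]\le 3pq$, together with the fact that a single $\beta_{j,n}$ cannot fall in two disjoint intervals, a direct moment computation gives, for $s\le t\le r$,
\begin{equation*}
\E[\Delta_{s,t}^2\,\Delta_{t,r}^2]\;\le\; C\Big(\tfrac1n\textstyle\sum_j c_{j,n}^2 p_{j,n}(s,t)\Big)\Big(\tfrac1n\textstyle\sum_j c_{j,n}^2 p_{j,n}(t,r)\Big)\;+\;C\,\frac{\max_j c_{j,n}^2}{n}\cdot\tfrac1n\textstyle\sum_j c_{j,n}^2 p_{j,n}(s,r),
\end{equation*}
and by \textbf{(i)} the sums $n^{-1}\sum_j c_{j,n}^2 p_{j,n}(\cdot,\cdot)$ converge to the corresponding increments of the nondecreasing — and, as the hypotheses force, continuous — function $\nu_\infty$. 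The first term is quadratic in the increments of the control function $\nu_\infty$, which is exactly what Billingsley's tightness criterion requires, and the tail on $[0,\infty]$ is governed by \textbf{(i)} at $t=\infty$, which bounds the total variance. The genuinely delicate point is the second term: it is only $o(1)$ times a quantity \emph{linear} in $\nu_\infty(r)-\nu_\infty(s)$, a power too small for Billingsley's criterion — the familiar degeneracy of triangular-array empirical processes near the diagonal. This is defeated by using \textbf{(ii)} quantitatively: the prefactor $\max_j c_{j,n}^2/n$ is $o(1)$, so when the oscillation-modulus estimate is summed over a partition of a fixed time window the total contribution of these terms is at most $o(1)\cdot\nu_\infty(\infty)\to 0$, while the quadratic term yields tightness in the usual way. (An equivalent and cleaner device, the one in Shorack's original argument, replaces the moment bound by a Bernstein-type exponential inequality for the bounded independent summands and controls the modulus of continuity directly; this also shows why the class of half-line indicators, being Vapnik--Chervonenkis, poses no entropy obstruction.) Combining this with the finite-dimensional convergence and the vanishing-jump observation of the first paragraph gives the functional CLT with the stated continuous Gaussian limit of covariance $K$.
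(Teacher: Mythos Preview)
The paper does not supply a proof of this statement: Theorem~\ref{thm:Shorack} is quoted from Shorack \cite{Shorack.79}, and the only argument the paper adds is the short remark explaining how to drop Shorack's normalisations $\sum_j c_{j,n}^2=n$ and $\beta_{j,n}\in[0,1]$ by rescaling and by the substitution $\beta_{j,n}=-\log(1-\alpha_{j,n})$. There is therefore no ``paper's proof'' to compare your attempt against; you are reconstructing a result the paper simply imports.

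On your sketch itself: the finite-dimensional step via Cram\'er--Wold and Lindeberg--Feller is correct and complete. The tightness step, however, has a real gap precisely where you flag the ``genuinely delicate point''. Your bound
\[
\E[\Delta_{s,t}^2\Delta_{t,r}^2]\;\le\;C\big(\nu_n(t)-\nu_n(s)\big)\big(\nu_n(r)-\nu_n(t)\big)\;+\;C\,\epsilon_n\big(\nu_n(r)-\nu_n(s)\big),\qquad \epsilon_n:=\tfrac{\max_j c_{j,n}^2}{n},
\]
has a second term of exponent~$1$ in the increment of the control function, and Billingsley's maximal inequality (the engine behind his Theorem~13.5) fails at that critical exponent: it needs exponent strictly greater than~$1$. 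Your proposed remedy, ``sum the oscillation-modulus estimate over a partition and use $\epsilon_n\to 0$'', does not close the gap as written. Summing the pointwise bounds over $m$ partition cells controls a \emph{sum} of moments, not the \emph{maximum} over cells; passing from the sum to the maximum is exactly where the super-linear exponent is spent, and you do not have it. Put differently, a union bound over $m$ cells together with Markov on the $2$--$2$ moment gives a bound proportional to $m\cdot\epsilon_n\cdot(\nu_\infty(T)/m)=\epsilon_n\nu_\infty(T)$ only for the sum of cell-wise probabilities, not for the probability of a large oscillation, and refining the mesh does not help.

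The Bernstein-inequality route you mention in parentheses is in fact the fix: an exponential tail bound for each $\Delta_{s,t}$ (available because the summands are independent, centered, and bounded by $n^{-1/2}\max_j|c_{j,n}|$) lets you union-bound over a grid and chain, bypassing the critical-exponent obstruction entirely. That, or a reduction to the uniform empirical process via the quantile transform, is what Shorack actually does. But that is a different argument from the one you carried out, and without it your tightness proof is incomplete.
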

\begin{remark}
    We note that in \cite{Shorack.79}, Shorack assumes that $\sum_{j=1}^n c_{j,n}^2 = n$ and $\beta_{j,n}$ take values in $[0,1]$. The first can be dropped by scaling and noting that $\sum_{j=1}^n c_{j,n}^2 = \sum_{j=1}^n c_{j,n}^2 G_{j,n}(\infty)$. The second can be dropped by considering $\alpha_{j,n}$ defined by 
        $\beta_{j,n} = -\log(1-\alpha_{j,n})$
    where $\alpha_{j,n}$ are supported on $[0,1]$.
\end{remark}

\subsection{Convergence of $X_{n,p}$}

In this section, we prove the following theorem.
\begin{theorem}\label{thm:Xconv}Suppose Assumption \ref{ass:1}. Then, jointly for $p = 0,1$,
    \begin{equation}\label{eqn:xnhelp1}
       \left(n^{1/2}\left( X_{n,p}(t) - \varphi_p^{(n)}(t)\right);t\ge 0\right)\weakarrow \left(\Psi_p(t);t\ge 0\right)
    \end{equation}
    where $\Psi_p$ are centered Gaussian processes with covariance function
    \begin{align}\label{eqn:covar}
       \E[\Psi_p(s)\Psi_q(t)] = \E\left[W^{p+q} \left(e^{-W(t\vee s)} - e^{-W(s+t)}\right)\right].
    \end{align}
\end{theorem}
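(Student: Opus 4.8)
The plan is to recognize $\bigl(n^{1/2}(X_{n,p}(t)-\varphi^{(n)}_p(t));t\ge 0\bigr)$ as a weighted empirical process and apply Shorack's Theorem~\ref{thm:Shorack} for the two marginal statements $p=0$ and $p=1$, and then to identify the joint limit through the Cram\'er--Wold device together with the Lindeberg--Feller central limit theorem for sums of independent (non-identically distributed) random variables. Before anything else I would record the elementary fact that Assumption~\ref{ass:1} is equivalent to the convergence $W_n\to W$ in the Wasserstein-$2$ distance, and in particular implies: (a) the family $(W_n^2)_{n\ge 1}$ is uniformly integrable; and (b) $\E[f(W_n)]\to\E[f(W)]$ for every continuous $f\colon\R_+\to\R$ with $|f(x)|\le C(1+x^2)$. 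From (a) I get $\max_{j\le n}w_j^2=o(n)$: either $\max_j w_j\le M$, in which case $n^{-1}\max_j w_j^2\le M^2/n\to 0$, or $\max_j w_j>M$, in which case $n^{-1}\max_j w_j^2\le \E[W_n^2\bone_{[W_n>M]}]$; hence $\limsup_n n^{-1}\max_j w_j^2\le \sup_n\E[W_n^2\bone_{[W_n>M]}]$, which tends to $0$ as $M\to\infty$.

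For the marginals, in the notation preceding Theorem~\ref{thm:Shorack} take $\beta_{j,n}=\xi_j$, so that $G_{j,n}(t)=1-e^{-w_j t}$, and take the deterministic weights $c_{j,n}=w_j^p$; then $F_n=X_{n,p}$ and $\overline F_n=\varphi^{(n)}_p$. Condition (ii) is the display of the previous paragraph when $p=1$ and is trivial when $p=0$. Since $G_{j,n}(s\wedge t)-G_{j,n}(s)G_{j,n}(t)=e^{-w_j(s\vee t)}-e^{-w_j(s+t)}$, we have $K_n(s,t)=\E\bigl[W_n^{2p}\bigl(e^{-W_n(s\vee t)}-e^{-W_n(s+t)}\bigr)\bigr]$ and $n^{-1}\sum_{j}c_{j,n}^2 G_{j,n}(t)=\E[W_n^{2p}(1-e^{-W_n t})]$; in each case the integrand is continuous with at most quadratic growth, so by (b) they converge, as $n\to\infty$, to $\E[W^{2p}(e^{-W(s\vee t)}-e^{-W(s+t)})]$ and to $\E[W^{2p}(1-e^{-Wt})]$ (a non-decreasing function of $t\in[0,\infty]$), respectively, verifying (iii) and (i). The hypotheses $\sum_j c_{j,n}^2=n$ and $\beta_{j,n}\in[0,1]$ that fail here are removed exactly as explained in the Remark following Theorem~\ref{thm:Shorack}. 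Thus Shorack's theorem gives, for each of $p=0,1$ separately, $n^{1/2}(X_{n,p}-\varphi^{(n)}_p)\weakarrow\Psi_p$ in $\D([0,\infty),\R)$ with $\Psi_p$ a centered, continuous Gaussian process with covariance \eqref{eqn:covar}. In particular each of these two sequences is $C$-tight, and therefore the pair $\bigl(n^{1/2}(X_{n,0}-\varphi^{(n)}_0),\,n^{1/2}(X_{n,1}-\varphi^{(n)}_1)\bigr)$ is $C$-tight in $\D([0,\infty),\R^2)$.

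It remains to show that the joint finite-dimensional distributions of this pair converge to those of a centered $\R^2$-valued Gaussian process with covariance \eqref{eqn:covar}; this does not fit Theorem~\ref{thm:Shorack} directly, because running the two weights $c_{j,n}=1$ and $c_{j,n}=w_j$ simultaneously would require the $2n$ observations $\xi_1,\dots,\xi_n,\xi_1,\dots,\xi_n$ to be independent, which they are not. Instead, fix times $t_1<\dots<t_k$ and reals $\gamma_1,\dots,\gamma_k,\delta_1,\dots,\delta_k$, and note that
\begin{equation*}
\sum_{l=1}^{k}\Bigl(\gamma_l\, n^{1/2}\bigl(X_{n,0}(t_l)-\varphi^{(n)}_0(t_l)\bigr)+\delta_l\, n^{1/2}\bigl(X_{n,1}(t_l)-\varphi^{(n)}_1(t_l)\bigr)\Bigr)=n^{-1/2}\sum_{j=1}^{n}\zeta_{j,n},
\end{equation*}
where $\zeta_{j,n}=\sum_{l=1}^{k}(\gamma_l+\delta_l w_j)\bigl(\bone_{[\xi_j\le t_l]}-G_{j,n}(t_l)\bigr)$ are independent and centered. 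By independence $\Var\bigl(n^{-1/2}\sum_j\zeta_{j,n}\bigr)=n^{-1}\sum_j\Var(\zeta_{j,n})$, and expanding $\Var(\zeta_{j,n})$ exactly as in the computation of $K_n$ above and then applying (b), this converges to $\Var\bigl(\sum_l(\gamma_l\Psi_0(t_l)+\delta_l\Psi_1(t_l))\bigr)$ for $(\Psi_0,\Psi_1)$ the centered Gaussian pair with covariance \eqref{eqn:covar}. Moreover $|\zeta_{j,n}|\le C(1+w_j)$ with $C$ depending only on the $\gamma_l,\delta_l$, so for every $\eps>0$ the Lindeberg sum $n^{-1}\sum_j\E\bigl[\zeta_{j,n}^2\bone_{[|\zeta_{j,n}|>\eps n^{1/2}]}\bigr]$ is bounded by $C'n^{-1}\sum_j(1+w_j^2)\bone_{[w_j>c_n]}$ with $c_n\to\infty$, and this tends to $0$ by the uniform integrability in (a) and the tightness of $(W_n)$. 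Hence $n^{-1/2}\sum_j\zeta_{j,n}\weakarrow\mathscr{N}(0,\sigma^2)$ for the appropriate $\sigma^2$, and by the Cram\'er--Wold device the joint finite-dimensional distributions converge as claimed. Combined with the $C$-tightness established above, this proves \eqref{eqn:xnhelp1} jointly for $p=0,1$, with continuous Gaussian limit.

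The only substantial work is the verification of (a), (b), of Shorack's hypotheses (i)--(iii), and of the Lindeberg condition; and the real obstacle is that all of these for the weight $c_{j,n}=w_j$ --- that is, the case $p=1$, and the $\delta_l$-part of $\zeta_{j,n}$ --- require genuine uniform control of the tails $\E[W_n^2\bone_{[W_n>M]}]$. This is exactly the point at which the second-moment hypothesis $\E[W_n^2]\to\E[W^2]$ of Assumption~\ref{ass:1} is used; without it one would be left with only the $p=0$ statement.
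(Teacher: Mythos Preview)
Your proof is correct and follows the same route as the paper: verify Shorack's hypotheses (i)--(iii) for each $p\in\{0,1\}$, deduce marginal convergence and hence tightness of the pair, then identify the joint limit. For this last step the paper simply asserts that subsequential limits are centered Gaussian and hence determined by the cross-covariance $n\E\bigl[(X_{n,0}(s)-\varphi_0^{(n)}(s))(X_{n,1}(t)-\varphi_1^{(n)}(t))\bigr]\to\E[W(e^{-W(s\vee t)}-e^{-W(s+t)})]$, whereas you spell out the finite-dimensional convergence via Cram\'er--Wold and the Lindeberg--Feller CLT; your version is more explicit (and in particular justifies the Gaussianity of the joint limit rather than taking it for granted), but the underlying computation is the same.
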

\begin{proof}
    Recall the definition of $X_{n,p}$ from \eqref{eqn:xrep}. Clearly, $X_{n,p}$ is a weighted empirical process with $c_{j,n} = w_j^p$ where we remind the reader that $w_j = w_j^{(n)}$ also depends on $n$. Also, $\E[X_{n,p}(t)] = \varphi_p^{(n)}(t)$.

    We begin by with the marginal convergence for each fixed $p = 0,1$ by verifying the conditions of Theorem \ref{thm:Shorack}. For hypothesis \textbf{(i)} in Theorem \ref{thm:Shorack} we note that, in our case, $G_{j,n}(t) = 1-\exp(-w_jt)$ for all $t\ge 0$ and so
     \begin{align*}
         n^{-1} \sum_{j=1}^n c_{j,n}^2 G_{j,n}(t) = n^{-1}\sum_{j=1}^n w_j^{2p} (1-\exp(-w_jt)) = \E[W_n^{2p}(1-e^{-W_nt})].
     \end{align*}
     As $W_n\weakarrow W$ and $\E[W^2_n]\to \E[W^2]$, we have for all $t\in[0,\infty]$
     \begin{align*}
         \E[W_n^{2p}(1-e^{-W_nt})] \to \E[W^{2p}(1-e^{-W t})]\qquad\textup{ for }p = 0,1.
     \end{align*} 

For hypothesis \textbf{(ii)} in Theorem \ref{thm:Shorack} we reorder $\bw^{(n)}$ so that $w_1>w_2>\dotsm$. Clearly this hypothesis holds when $p = 0$, so we restrict to $p = 1$. Note that for all $t\ge 0$
    \begin{equation*}
    \limsup_{n\to\infty} n^{-1}w_1^2 \le   \limsup_{n\to\infty} n^{-1}\sum_{j: w_j>t} w_j^2 = \limsup_{n\to\infty}   \E[W_n^2\bone_{[W_n> t]}] \le  \E[W^2\bone_{[W\ge t]}]\underset{t\to\infty}{\longrightarrow}0.
    \end{equation*} It follows that $c_{1,n}^2 = w_1^2 = o(n)$.

We now turn to hypothesis \textbf{(iii)} in Theorem \ref{thm:Shorack}. We write $K_{n,p}$ for the respective weighted covariance term appearing when verifying the conditions for $X_{n,p}$. We recall that $G_{j,n}(u) = 1-e^{-w_ju}$ for all $u\ge 0$ and $p = 0,1$ and so for $s\le t$ 
\begin{align}\label{eqn:covconv}
   K_{n,p}(s,t) &= n^{-1}\sum_{j=1}^n w_{j}^{2p} G_{j,n}(s)(1-G_{j,n}(t))=n^{-1} \sum_{j=1}^{n} w_j^{2p} (1-e^{-w_js})e^{-w_jt}\\
  \nonumber &= \E[W_n^{2p} \left(e^{-W_nt} - e^{-W_n(t+s)}\right)] \to \E[W^{2p}(e^{-Wt} - e^{-W(t+s)})]
\end{align}
as claimed. 

Hence, we have for each $p = 0,1$ the marginal weak convergence in \eqref{eqn:xnhelp1} holds for the stated limit. We now verify that this convergence is joint over $p = 0,1$. Clearly, the marginal convergence implies that the sequence \begin{align*}
  \left\{ \left( \left(n^{1/2}\left( X_{n,0}(t) - \varphi_0^{(n)}(t)\right);t\ge 0\right)  ,   \left(n^{1/2}\left( X_{n,1}(t) - \varphi_1^{(n)}(t)\right);t\ge 0\right)\right);n\ge 1\right\}
\end{align*}
is tight in $\D(\R_+,\R)^2$. We just need to show that all the subsequential limits are the same; however since the limits are centered continuous Gaussian processes, we only need to show convergence of the covariance. Namely, we must show
\begin{align*}
  n\E\left[\left(X_{n,0}(s)-\varphi_{0}^{(n)}(s)\right)\left(X_{n,1}(t)-\varphi_{1}^{(n)}(t)\right)\right]\to \E[W(e^{-W(t\vee s)}- e^{-W(t+s)})].
\end{align*} This is essentially the same computation as \eqref{eqn:covconv} so we omit this.
\end{proof}

\section{Proof of Theorem \ref{thm:EFLthm}}

We now turn to the proof of Theorem \ref{thm:EFLthm}. By Theorem \ref{thm:Xconv} and an application of Skorohod's representation theorem, we have the following approximation for $H_n(t,\lambda):= X_{n,1}(\lambda t)-t$
\begin{align}
\label{eqn:Hnexpand} H_n(t,\lambda)  =  (\varphi_1^{(n)}(\lambda t)-t) +\frac{1}{n^{1/2}} \Psi_1(\lambda t) + \Delta_n(\lambda t)
\end{align}
where $n^{1/2}\Delta_n(\lambda t) \to 0$ locally uniformly. 
Here we have used the fact that $J_1$ convergence is equivalent to local uniform convergence when the limit is continuous. See, for example, \cite[Section 3.10]{EK.86}.

Recall from around \eqref{eqn:thetadef} that $\theta^{(n)}(\lambda)$, for $\lambda>\lambda_{\crit} = 1/\E[W^2]$ and all $n$ large, is the unique root in $t>0$ of $\E[W_n(1-e^{-W_n\lambda t})]-t$. It is easy to see that the longest excursion of $t\mapsto H_n(t,\lambda)$ is roughly $\theta^{(n)}(\lambda)$ for each fixed $\lambda>\lambda_\crit$ and $n$ large. More precisely, let $\g_n(\lambda)$ (resp. $\dd_n(\lambda)$) be the start (resp. end) of the longest excursion of $t\mapsto H_n(t,\lambda)-t$, then for each fixed $\lambda$ 
\begin{equation*}
    \g_n(\lambda) \weakarrow 0 \qquad\textup{and}\qquad \dd_n(\lambda)\weakarrow \theta(\lambda).
\end{equation*}
We will show the following:
\begin{theorem}\label{thm:DGconv}
  Under Assumption \ref{ass:1}, the following weak convergence holds in $\D((\E[W^2],\infty),\R)$
    \begin{align*}
        \left(n^{1/2}\left(\dd_n(\lambda) - \theta^{(n)}(\lambda) \right);\lambda>\lambda_\crit\right) &\weakarrow \left(\frac{\Psi_1(\lambda \theta(\lambda))}{\beta(\lambda)};\lambda >\lambda_\crit\right)\\
    \left(n^{1/2}\g_n(\lambda);\lambda>\lambda_\crit\right)&\weakarrow (0;\lambda>\lambda_\crit),
    \end{align*}
    where $\Psi_1$ is as in Theorem \ref{thm:Xconv} and $\beta$ is as in \eqref{eqn:betadef}. Moreover, this convergence holds jointly with the convergence in Theorem \ref{thm:Xconv}.
\end{theorem}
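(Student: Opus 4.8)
The plan is to extract the asymptotics of the longest-excursion endpoints $\g_n(\lambda)$ and $\dd_n(\lambda)$ from the expansion \eqref{eqn:Hnexpand} by a fixed-$\lambda$ implicit-function-theorem argument, then upgrade to process convergence on $(\lambda_\crit,\infty)$ by a tightness/continuity argument. For fixed $\lambda>\lambda_\crit$, write $h^{(n)}(t) = \varphi_1^{(n)}(\lambda t)-t$ for the deterministic part of $H_n(t,\lambda)$. Since $\varphi_1^{(n)}\to\varphi_1$ uniformly with matching derivatives (the $\varphi_p^{(n)}$ are analytic on $(0,\infty)$ with $\frac{d}{dt}\varphi_1^{(n)}(t)=\E[W_n^2 e^{-W_nt}]\to\E[W^2 e^{-Wt}]$ using Assumption \ref{ass:1}), the function $h^{(n)}$ has a unique positive zero $\theta^{(n)}(\lambda)$, it is positive on $(0,\theta^{(n)}(\lambda))$ and negative just to the right, and its derivative at that zero is $\lambda\E[W_n^2 e^{-W_n\lambda\theta^{(n)}(\lambda)}]-1 = -\beta^{(n)}(\lambda)$, which converges to $-\beta(\lambda)<0$ (the strict sign coming from supercriticality). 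Because $n^{1/2}\Delta_n(\lambda\,\cdot\,)\to 0$ locally uniformly and $n^{-1/2}\Psi_1(\lambda\,\cdot\,)\to0$ locally uniformly, the whole path $H_n(\cdot,\lambda)$ converges locally uniformly to $h$, whose only excursion of $t\mapsto h(t)$ below its running infimum on $(0,\infty)$ is $(0,\theta(\lambda))$; standard arguments on excursion lengths of converging càdlàg paths (as in the definition of $\LL$ preceding Theorem \ref{thm:limic}) then give $\g_n(\lambda)\weakarrow0$ and $\dd_n(\lambda)\weakarrow\theta(\lambda)$.

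For the first-order correction: $\dd_n(\lambda)$ is, up to a $o(n^{-1/2})$ error coming from the left endpoint $\g_n(\lambda)$ (whose contribution to the running-infimum level I will bound below), the first zero of $t\mapsto H_n(t,\lambda)$ after the excursion starts, i.e. essentially the root near $\theta^{(n)}(\lambda)$ of
\[
0 = \varphi_1^{(n)}(\lambda t)-t + \frac{1}{n^{1/2}}\Psi_1(\lambda t) + \Delta_n(\lambda t).
\]
Writing $t = \theta^{(n)}(\lambda) + n^{-1/2}r_n$, Taylor-expanding $\varphi_1^{(n)}(\lambda t)-t$ around $\theta^{(n)}(\lambda)$, using $h^{(n)}(\theta^{(n)}(\lambda))=0$ and $(h^{(n)})'(\theta^{(n)}(\lambda)) = -\beta^{(n)}(\lambda)$, and using that $\Psi_1$ is continuous with $\theta^{(n)}(\lambda)\to\theta(\lambda)$, one gets $-\beta^{(n)}(\lambda)\,r_n + \Psi_1(\lambda\theta(\lambda)) + o(1) = 0$, hence $r_n\to \Psi_1(\lambda\theta(\lambda))/\beta(\lambda)$. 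A clean way to make this rigorous is to note that $t\mapsto H_n(t,\lambda)$ is monotone (strictly decreasing) in a fixed neighbourhood of $\theta(\lambda)$ for $n$ large — because $(h^{(n)})' < -\beta(\lambda)/2 < 0$ there and the stochastic perturbation is uniformly small — so its zero there is unique and depends continuously (indeed, via the perturbation bound) on the perturbation; this is the quantitative inverse-function-theorem input acknowledged in the acknowledgements. That $\g_n(\lambda) = o_\PR(n^{-1/2})$ and hence does not affect the level of the running infimum to order $n^{-1/2}$ follows from the same local-monotonicity picture applied near $t=0$, using that $h^{(n)}(t)\sim(\lambda\E[W_n^2]-1)t$ has strictly positive slope at $0$.

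Finally, to get the convergence in $\D((\lambda_\crit,\infty),\R)$ jointly with Theorem \ref{thm:Xconv}: I would first enhance \eqref{eqn:Hnexpand} to a statement uniform in $\lambda$ over compacts of $(\lambda_\crit,\infty)$ and $t$ over compacts of $(0,\infty)$ — this is immediate from Theorem \ref{thm:Xconv} applied to the two-parameter family, or simply from the one-parameter statement since $X_{n,1}(\lambda t)$ is monotone in the product $\lambda t$. Then the map $\lambda\mapsto r_n(\lambda) := n^{1/2}(\dd_n(\lambda)-\theta^{(n)}(\lambda))$ is, by the argument above, within $o(1)$ (uniformly on compacts, in probability) of the continuous functional $\lambda\mapsto \Psi_1(\lambda\theta(\lambda))/\beta^{(n)}(\lambda)$ of the already-converging process $\Psi_1$; since $\beta^{(n)}\to\beta$ uniformly, $\theta^{(n)}\to\theta$ uniformly, and $\Psi_1$ has continuous sample paths, the continuous mapping theorem delivers the claimed limit, jointly with $\Psi_0,\Psi_1$ themselves (and with $n^{1/2}\g_n(\lambda)\to0$ uniformly on compacts by the same bound). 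The only subtlety for the Skorohod-space statement on the open interval $(\lambda_\crit,\infty)$ is that everything is in fact continuous in $\lambda$ in the limit, so $J_1$ convergence on each compact subinterval — which is what the above gives — suffices.

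The main obstacle I anticipate is the quantitative control needed to turn the pointwise implicit-function expansion into something uniform in $\lambda$ on compact subintervals of $(\lambda_\crit,\infty)$, in particular a uniform-in-$\lambda$ lower bound on $\beta^{(n)}(\lambda)$ and on the slope of $h^{(n)}$ near $0$ and near $\theta^{(n)}(\lambda)$, together with a uniform-in-$\lambda$ bound showing the stochastic perturbation $n^{-1/2}\Psi_1(\lambda\cdot)+\Delta_n(\lambda\cdot)$ stays small on the relevant $t$-windows; these are not hard but require a little care because $\theta^{(n)}(\lambda)$ varies with $\lambda$. Handling the left endpoint $\g_n(\lambda)$ — showing its fluctuations are genuinely negligible at scale $n^{-1/2}$ and do not perturb the infimum level — is a second point that needs an explicit (if short) argument rather than hand-waving.
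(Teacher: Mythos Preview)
Your overall strategy matches the paper's: work under the Skorohod coupling \eqref{eqn:Hnexpand}, Taylor-expand the deterministic part $h^{(n)}(t)=\varphi_1^{(n)}(\lambda t)-t$ near $t=0$ and near $t=\theta^{(n)}(\lambda)$, and sandwich $\dd_n(\lambda)$ and $\g_n(\lambda)$ between explicit values uniformly over $\lambda\in[\lambda_0,\lambda_1]$. Your identification of the two delicate points---uniformity of the slope bounds in $\lambda$, and the contribution of $\g_n(\lambda)$ to the running-infimum level---is accurate, and the paper handles both exactly as you anticipate.

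There is, however, a genuine gap in the ``clean way to make this rigorous'' step. You assert that $t\mapsto H_n(t,\lambda)$ is strictly decreasing in a fixed neighbourhood of $\theta(\lambda)$ for large $n$. This is false: $H_n(t,\lambda)=X_{n,1}(\lambda t)-t$ has slope $-1$ between jumps and jumps \emph{upward} by $w_j/n$ at each arrival $\xi_j/\lambda$. Smallness of the perturbation $n^{-1/2}\Psi_1+\Delta_n$ in sup norm says nothing about its increments at scale $n^{-1/2}$, so adding it to a smooth decreasing function does not yield a monotone function, and an inverse-function-theorem argument does not apply. Relatedly, $\dd_n(\lambda)$ is not a zero of $H_n(\cdot,\lambda)$ but the first passage below the random level $H_n(\g_n(\lambda)-,\lambda)\in[-\g_n(\lambda),0]$.

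The paper sidesteps this by dropping monotonicity and uniqueness entirely in favour of an explicit $\eps$-sandwich: your Taylor expansion is evaluated at the two specific points $t_\pm=\theta^{(n)}(\lambda)+n^{-1/2}\bigl(\Psi_1(\lambda\theta^{(n)}(\lambda))\pm\eps\bigr)/\beta^{(n)}(\lambda)$ to show $H_n(t_-,\lambda)>0$ and $H_n(t_+,\lambda)<-\eps/(2n^{1/2})$ uniformly in $\lambda\in[\lambda_0,\lambda_1]$ (Proposition~\ref{prop:GD} and Lemma~\ref{lem:Gnrightedge}); combined with the level bound $H_n(\g_n(\lambda)-,\lambda)\ge -\eps n^{-1/2}$ (Lemma~\ref{lem:gnlowbound}), this traps $\dd_n(\lambda)$ between $t_-$ and $t_+$ without ever needing $H_n$ to be monotone or to have a unique crossing. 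Your expansion already contains everything required; the fix is to replace the inverse-function-theorem framing with this two-point evaluation.
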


The proof of the above theorem is the entirety of Section \ref{sec:proofofthm3.1}. Before turning to that, we show how Theorem \ref{thm:DGconv} implies Theorem \ref{thm:NRgiant}.
\subsection{Proof of Theorem \ref{thm:NRgiant} by Theorem \ref{thm:DGconv}}

The weak convergence in Theorem \ref{thm:NRgiant} is simplified by the following elementary lemma, which is a slight variation of Theorem 13.3.3 in \cite{Whitt.02}. 
\begin{lemma}\label{lem:whitt}
  Let $x_n,y_n\in\D(\R_+,\R)$, and $x,y,u,v\in C(\R_+,\R)$ be such that $y$ is strictly increasing and $x$ is continuously differentiable. If $c_n\to\infty$ is a sequence of real numbers such that locally uniformly in $t$ it holds that $c_n(x_n(t)-x(t)) \to u(t)$ and $c_n(y_n(t)-y(t)) \to v(t)$, then
  \begin{align*}
      c_n(x_n\circ y_n(t) - x\circ y(t)) \longrightarrow u\circ y(t) + x'\circ y(t) v(t)\qquad\text{locally uniformly}.
  \end{align*}
\end{lemma}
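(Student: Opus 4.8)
The plan is to prove Lemma~\ref{lem:whitt} by the standard trick of splitting the difference $x_n\circ y_n - x\circ y$ into a term that sees only the perturbation in the outer function and a term that sees only the perturbation in the inner function, and then using continuity and differentiability to control each piece. Concretely, write
\begin{align*}
c_n\bigl(x_n(y_n(t)) - x(y(t))\bigr) = c_n\bigl(x_n(y_n(t)) - x(y_n(t))\bigr) + c_n\bigl(x(y_n(t)) - x(y(t))\bigr).
\end{align*}
For the first term, the hypothesis gives $c_n(x_n(s) - x(s)) \to u(s)$ locally uniformly in $s$; since $y_n \to y$ locally uniformly and $y$ is continuous (so $\{y_n(t): n\ge1,\ t\in[0,T]\}$ is bounded for each $T$), one composes the uniform convergence of $c_n(x_n - x)$ with the convergence $y_n \to y$ and uses the uniform continuity of the limit $u$ on compacts to conclude $c_n(x_n(y_n(t)) - x(y_n(t))) \to u(y(t))$ locally uniformly.

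For the second term, I would use that $x$ is continuously differentiable: by the mean value theorem (or the fundamental theorem of calculus), $x(y_n(t)) - x(y(t)) = x'(\zeta_n(t))\,(y_n(t) - y(t))$ for some $\zeta_n(t)$ between $y_n(t)$ and $y(t)$. Multiplying by $c_n$ gives $x'(\zeta_n(t))\cdot c_n(y_n(t) - y(t))$. Since $y_n \to y$ locally uniformly, $\zeta_n(t) \to y(t)$ locally uniformly, and since $x'$ is continuous it is uniformly continuous on compacts, so $x'(\zeta_n(t)) \to x'(y(t))$ locally uniformly; combined with $c_n(y_n - y) \to v$ locally uniformly and the local boundedness of $v$, the product converges to $x'(y(t))\,v(t)$ locally uniformly. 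Adding the two limits yields $u\circ y(t) + x'\circ y(t)\,v(t)$, as claimed. The hypothesis that $y$ is strictly increasing is not actually needed for this argument (it is presumably retained only to match the cited Theorem~13.3.3 in \cite{Whitt.02} or for later use); continuity of $y$ suffices.

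The only mild subtlety — and the step I would be most careful about — is the composition in the first term: one needs that $c_n(x_n(\cdot) - x(\cdot)) \to u$ \emph{uniformly on a compact set containing all the values $y_n(t)$ for $t$ in a given compact}, not merely pointwise. This is fine because local uniform convergence of $y_n\to y$ with $y$ continuous means that for fixed $T$ there is a compact $[0,M]\supseteq \{y_n(t): n\ge1,\ t\in[0,T]\}$, and then $\sup_{s\in[0,M]}|c_n(x_n(s)-x(s)) - u(s)| \to 0$ together with $\sup_{t\in[0,T]}|u(y_n(t)) - u(y(t))| \to 0$ (uniform continuity of $u$ on $[0,M]$) gives the claim. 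Everything else is a routine $\varepsilon$-argument, so I would keep the write-up short: state the two-term splitting, dispatch each term in a sentence or two using uniform continuity on compacts, and conclude.
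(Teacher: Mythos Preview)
Your argument is correct and is exactly the standard two-term decomposition that underlies Whitt's Theorem~13.3.3; the paper does not write out a proof at all but simply remarks that the lemma is a slight variation of that theorem and that the same proof works under locally uniform convergence. Your observation that strict monotonicity of $y$ is unnecessary is also accurate and matches the paper's comment that this hypothesis is inherited from Whitt's formulation (where it ensures $x_n\circ y_n$ is c\`adl\`ag) rather than needed for the argument itself.
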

The only difference between the result stated above and the statement in \cite[Theorem 13.3.3]{Whitt.02} is that Whitt assumes that $y_n$ is non-decreasing so that the composition $x_n\circ y_n$ is c\`adl\`ag. This allows the use of the $J_1,M_1$, or $M_2$ topologies. However, the same proof found therein works when all convergences are assumed to be locally uniform.

Recall from Corollary \ref{cor:giant}, that
\begin{equation*}
    \left(n^{-1} L_n(\lambda) ;\lambda>0\right)\overset{d}{=} \left(n^{-1}+X_{n,0}\left( \lambda \dd_n(\lambda)\right)- X_{n,0}\left(\lambda \g_n(\lambda)\right);\lambda>0\right).
\end{equation*}
The $n^{-1}$ term is asymptotically negligible after scaling by $n^{1/2}$ and will subsequently be ignored. By continuity of multiplication, it is easy to see from Theorem \ref{thm:DGconv} that (jointly with the weak convergence of $X_{n,0}$)
\begin{equation*}
    \left(n^{1/2}\left(\lambda \dd_n(\lambda) - \lambda \theta^{(n)}(\lambda)\right);\lambda>\lambda_\crit\right)\weakarrow \left(\frac{\lambda}{\beta(\lambda)} \Psi_1(\lambda \theta(\lambda))); \lambda>\lambda_\crit\right).
\end{equation*}

Recall that $\varrho^{(n)}(\lambda) = \varphi_0^{(n)}(\lambda \theta^{(n)}(\lambda))$. By combining Theorems \ref{thm:Xconv} and \ref{thm:DGconv} it is easy to see that jointly
\begin{align*}
    & \left(n^{1/2} \left(X_{n,0}(\lambda \dd_n(\lambda)) - \varrho^{(n)}(\lambda)\right);\lambda>\lambda_\crit\right) \weakarrow \left(\Psi_0(\lambda\theta(\lambda))+ \varphi_0'(\lambda \theta(\lambda)) \frac{\lambda \Psi_1(\lambda\theta(\lambda))}{\beta(\lambda)};\lambda>\lambda_\crit\right)\\
&\left(n^{1/2} X_{n,0}(\lambda \g_n(\lambda));\lambda>\lambda_\crit\right) \weakarrow \left(0;\lambda>\lambda_\crit\right).
\end{align*}
Combining the last few displayed equations and Lemma \ref{lem:whitt}, we see (jointly with the convergence in Theorem \ref{thm:DGconv})
\begin{align*}
    \left(\frac{L_n(\lambda)-\varrho^{(n)}(\lambda)n}{n^{1/2}};\lambda>\lambda_\crit\right) \weakarrow \left(\bX_0(\lambda);\lambda>\lambda_\crit\right),
\end{align*}
where $\bX_0 (\lambda) = \Psi_0 (\lambda \theta(\lambda)) + \frac{\lambda \varphi_0'(\lambda \theta(\lambda))}{\beta(\lambda)} \Psi_1(\lambda\theta(\lambda)).$ The convergence in Theorem \ref{thm:NRgiant} as a process in $\D((\lambda_\crit,\infty),\R^2)$ follows from both the limits $\bX_0$ and the limit from Theorem \ref{thm:DGconv} being a.s. continuous and elementary properties of local uniform convergence.

\subsection{Proof of Theorem \ref{thm:DGconv}}\label{sec:proofofthm3.1}

Our approach will be analogous to \cite{BR.12}. We recall from \eqref{eqn:betadef} that 
\begin{equation*}
    \beta^{(n)}(\lambda)= -\frac{\partial}{\partial t}\left(\varphi_1^{(n)}(\lambda t)-t\right)\bigg|_{t = \theta^{(n)}(\lambda)}=
     \left(1-\lambda \E[W_n^2 e^{-W_n \lambda \theta^{(n)}(\lambda)}]\right)
\end{equation*} and a similar statement holds for $\beta$. We claim that for all $\lambda>\lambda_\crit$, $\beta(\lambda)>0$. To see this note that for each fixed $\lambda>\lambda_\crit$ the function
$f:t\mapsto \varphi_1(\lambda t)-t$ is bounded above and continuously differentiable with strictly decreasing derivative $f'(t) = \lambda \E[W^2 e^{-W \lambda t}] - 1$. Since $f$ is maximized uniquely at a point $t^*$ strictly between its two roots $0<\theta(\lambda)$, the derivative at $\theta(\lambda)>t^*$ must be strictly negative as $f'(\theta(\lambda)) < f'(t^*) = 0$. Finally, $\beta(\lambda) = -f'(\theta(\lambda)) >0$.

\begin{proposition}\label{prop:GD} Fix $\lambda_\crit<\lambda_0\le \lambda_1$, $\eps>0$, and set $T = \E[W]$. Under the a.s. coupling of $H_n$ in \eqref{eqn:Hnexpand}, it holds that a.s. for all sufficiently large $n$
    \begin{align*}
        \inf_{\lambda\in[\lambda_0,\lambda_1]} \inf_{\displaystyle s\in\left[\frac{\eps}{n^{1/2} }, \theta^{(n)}(\lambda) + \frac{ \Psi_1(\lambda\theta^{(n)}(\lambda)) - \eps}{\beta^{(n)}(\lambda) n^{1/2}}\right]} H_n(s,\lambda) > 0.
    \end{align*}
\end{proposition}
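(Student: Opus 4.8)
The plan is to control $H_n(s,\lambda) = (\varphi_1^{(n)}(\lambda s) - s) + n^{-1/2}\Psi_1(\lambda s) + \Delta_n(\lambda s)$ uniformly over $\lambda \in [\lambda_0,\lambda_1]$ and over the stated range of $s$. Write $f_\lambda^{(n)}(s) = \varphi_1^{(n)}(\lambda s) - s$; this is the deterministic part. The key structural facts are: $f_\lambda^{(n)}$ is concave (since $\varphi_1^{(n)}(\cdot) = \E[W_n(1-e^{-W_n\cdot})]$ is concave), it vanishes at $0$ and at $\theta^{(n)}(\lambda)$, it is strictly positive on $(0,\theta^{(n)}(\lambda))$, and its derivative at $\theta^{(n)}(\lambda)$ is $-\beta^{(n)}(\lambda) < 0$. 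Since $\beta^{(n)}(\lambda) \to \beta(\lambda)$ uniformly on compacts and $\beta(\lambda) > 0$ there (shown just above the proposition), we have $\inf_{\lambda \in [\lambda_0,\lambda_1]} \beta^{(n)}(\lambda) \geq \beta_* > 0$ for all large $n$. Similarly $\theta^{(n)}(\lambda) \to \theta(\lambda) < T = \E[W]$ uniformly, and $\theta(\lambda) \geq \theta(\lambda_0) > 0$, so $\theta^{(n)}(\lambda)$ stays in a fixed compact subinterval of $(0,T)$.

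First I would split the interval $[\eps n^{-1/2}, \theta^{(n)}(\lambda) + n^{-1/2}\beta^{(n)}(\lambda)^{-1}(\Psi_1(\lambda\theta^{(n)}(\lambda)) - \eps)]$ into two pieces at, say, $s = \theta^{(n)}(\lambda) - \delta$ for a small fixed $\delta > 0$ (and note the right endpoint of the whole interval is at most $\theta^{(n)}(\lambda) + O(n^{-1/2})$, so for large $n$ it lies within $\delta$ of $\theta^{(n)}(\lambda)$). On the "bulk" piece $[\eps n^{-1/2}, \theta^{(n)}(\lambda) - \delta]$ I would use a uniform lower bound $\inf_{\lambda,s} f_\lambda^{(n)}(s) \geq c(\delta,\eps) > 0$ coming from concavity: $f_\lambda^{(n)}(s) \geq \min(s/(\eps n^{-1/2}), \ldots)$ — more carefully, by concavity on $[0,\theta^{(n)}(\lambda)]$ the graph lies above the chord, so $f_\lambda^{(n)}(s) \geq \frac{s}{\theta^{(n)}(\lambda)} f_\lambda^{(n)}(\theta^{(n)}(\lambda) - \delta) \wedge \text{(left chord bound)}$; the point is $f_\lambda^{(n)}(s)$ is bounded below by a strictly positive constant uniformly, while $n^{-1/2}\Psi_1(\lambda s) + \Delta_n(\lambda s) \to 0$ uniformly on the compact $s$-range (using local uniform convergence of $\Delta_n$ to $0$ and continuity/boundedness of $\Psi_1$ on compacts). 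Here I'd also need the easy observation that on $[\eps n^{-1/2}, \delta']$ for small $\delta'$, $f_\lambda^{(n)}(s) \geq c\,s \geq c\eps n^{-1/2}$ while the noise is $O(n^{-1/2})$ times a factor tending to $0$ — actually the noise term near $s$ of order $n^{-1/2}$ is itself $O(n^{-1/2} \cdot n^{-1/2})$ smallish since $\Psi_1(\lambda s) \approx \Psi_1(0) = 0$ plus modulus-of-continuity corrections — so the left end also works, giving the second displayed limit $n^{1/2}\g_n \to 0$ as a byproduct.

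The delicate piece is the "near-$\theta$" window $s \in [\theta^{(n)}(\lambda) - \delta, \theta^{(n)}(\lambda) + n^{-1/2}\beta^{(n)}(\lambda)^{-1}(\Psi_1(\lambda\theta^{(n)}(\lambda)) - \eps)]$, where $f_\lambda^{(n)}$ is small and comparable to the noise. Here I would Taylor-expand: for $s = \theta^{(n)}(\lambda) + n^{-1/2} a$ with $a$ in a bounded set, $f_\lambda^{(n)}(s) = -\beta^{(n)}(\lambda) n^{-1/2} a + O(n^{-1})$ by $C^2$-smoothness of $\varphi_1^{(n)}$ with uniformly bounded second derivative (note $|\varphi_1^{(n)\prime\prime}| \leq \E[W_n^3]$ — wait, that need not be bounded; instead use that $\varphi_1^{(n)\prime\prime}(u) = -\E[W_n^3 e^{-W_n u}]$ and near $u = \lambda\theta^{(n)}(\lambda) \geq \lambda_0 \theta(\lambda_0)/2 > 0$ this is bounded by $\sup_{x>0} x^3 e^{-x u_*} \cdot \E[W_n^2]/\text{const}$, hence uniformly bounded). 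So on this window $H_n(s,\lambda) = n^{-1/2}\big(-\beta^{(n)}(\lambda) a + \Psi_1(\lambda\theta^{(n)}(\lambda))\big) + o(n^{-1/2})$, uniformly in $\lambda$, using also that $\Psi_1(\lambda s) - \Psi_1(\lambda\theta^{(n)}(\lambda)) \to 0$ uniformly since $\lambda s - \lambda\theta^{(n)}(\lambda) = O(n^{-1/2})$ and $\Psi_1$ is uniformly continuous on compacts. Since $a \leq \beta^{(n)}(\lambda)^{-1}(\Psi_1(\lambda\theta^{(n)}(\lambda)) - \eps)$, the bracket is $\geq \eps$, so $n^{1/2} H_n(s,\lambda) \geq \eps/2 > 0$ for all large $n$, uniformly. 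The main obstacle is precisely making this Taylor estimate \emph{uniform} in $\lambda \in [\lambda_0,\lambda_1]$ and in $s$ across the $n^{-1/2}$-window while simultaneously absorbing the random terms $\Psi_1$ and $\Delta_n$; this requires the uniform bounds on $\beta^{(n)}, \theta^{(n)}$ away from their degenerate values, the uniform bound on $\varphi_1^{(n)\prime\prime}$ away from $u=0$, the local uniform (Skorohod-coupled) convergence $n^{1/2}\Delta_n \to 0$ from \eqref{eqn:Hnexpand}, and uniform continuity of the sample path $\Psi_1$ — all of which are available. I would then note the whole argument gives the analogous \emph{upper} companion (an excursion of $t \mapsto H_n(t,\lambda)$ ending below $\theta^{(n)}(\lambda) + n^{-1/2}\beta^{(n)}(\lambda)^{-1}(\Psi_1 + \eps)$), and together with the bulk estimate these pin down $\dd_n(\lambda)$ and $\g_n(\lambda)$ as claimed in Theorem \ref{thm:DGconv}; but for the present Proposition only the lower bound displayed is needed.
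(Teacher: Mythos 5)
Your high-level strategy — split the $s$-range into pieces, linearize the drift $f_\lambda^{(n)}(s) := \varphi_1^{(n)}(\lambda s) - s$ near $0$ and near $\theta^{(n)}(\lambda)$, and dominate the random terms — is exactly the strategy of the paper. But there is a real gap in how the pieces fit together, and the paper's choice of decomposition exists precisely to avoid it.

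The gap: your ``near-$\theta$'' window is $\bigl[\theta^{(n)}(\lambda)-\delta,\ \theta^{(n)}(\lambda)+O(n^{-1/2})\bigr]$ with $\delta>0$ \emph{fixed}, but your Taylor estimate is only stated ``for $s=\theta^{(n)}(\lambda)+n^{-1/2}a$ with $a$ in a bounded set,'' i.e.\ for $s$ within $O(n^{-1/2})$ of $\theta^{(n)}(\lambda)$. The intermediate range $s\in\bigl[\theta^{(n)}(\lambda)-\delta,\ \theta^{(n)}(\lambda)-Cn^{-1/2}\bigr]$ is handled neither by your bulk estimate (which stops at $\theta^{(n)}(\lambda)-\delta$) nor by your Taylor step, and in that range the claimed uniform estimate $\Psi_1(\lambda s)-\Psi_1(\lambda\theta^{(n)}(\lambda))\to 0$ fails: for $s$ at distance $\delta$ the difference is of order $\omega_{\Psi_1}(\lambda_1\delta)$, a fixed positive random quantity, not $o(1)$. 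The same issue appears at the left edge: for $s$ ranging up to a fixed $\delta'$, $\Psi_1(\lambda s)$ is not uniformly $o(1)$; you only get smallness by shrinking $\delta'$ with the (random) modulus of continuity. You can plug both holes — either by treating the intermediate ranges via the drift lower bound $f_\lambda^{(n)}(s)\ge c\min(s,\theta^{(n)}(\lambda)-s)\gg n^{-1/2}$ when $s$ is at distance $\ge Cn^{-1/2}$ from both endpoints for a suitable random $C$, or by letting $\delta,\delta'$ be random and small enough — but as written the argument does not cover the full interval.

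The paper sidesteps all of this by splitting at the \emph{growing} deterministic scale $a_n n^{-1/2}$ with $a_n\to\infty$, $a_n=o(n^{1/2})$: the two edge windows then have width $O(a_n n^{-1/2})\to 0$, so $\Psi_1(\lambda s)$ (resp.\ $\Psi_1(\lambda s)-\Psi_1(\lambda\theta^{(n)}(\lambda))$) really is uniformly $o(1)$ on them, and in the middle piece the drift is $\gtrsim a_n n^{-1/2}\gg n^{-1/2}$ so the $O(n^{-1/2})$ noise is dominated with no tuning. Two smaller remarks: (i) the paper uses only the mean value theorem (first-order), whereas you invoke a second-order Taylor expansion and then need to bound $\varphi_1^{(n)\prime\prime}$; your resolution of the third-moment issue (bounding $\E[W_n^3 e^{-W_n u}]$ for $u$ bounded away from $0$) is correct, but the second-order expansion is unnecessary. (ii) Your concavity-based lower bound on the bulk is a valid alternative to the paper's MVT computation and conveys the same quantitative picture.
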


\begin{proof}

Note that $\frac{d}{dt}\varphi_1^{(n)}(t) = \E[W_n^2 e^{-W_nt}]$ and $\E[W_n^2e^{-W_nt}]\to \E[W^2 e^{-Wt}]$ uniformly in $t\ge 0$ by Dini's theorem. Also, by the mean value theorem we have for all $x\ge 0$, $y\ge - \theta^{(n)}(\lambda)$ and $\lambda >\lambda_\crit$
\begin{align}
    \nonumber &\varphi_1^{(n)}(x) = \E[W_n^2 e^{-W_n \zeta_{n,l}(x)}]x\\
    \label{eqn:taylor2}&\varphi_1^{(n)}(\lambda \theta^{(n)}(\lambda) + y) - \varphi_1^{(n)}(\lambda \theta^{(n)}(\lambda))= \E[W_n^{2} e^{-W_n \lambda \zeta_{n,r}(\lambda, y)}]y
\end{align} for some $\zeta_{n,l}(x)\in [0,x]$ and $\zeta_{n,r}(\lambda, y)$ between $\theta^{(n)}(\lambda)$ and $\theta^{(n)}(\lambda) + y$. Similar formulations hold for $\varphi_1$.  Recall that $\varphi_1^{(n)}(\lambda \theta^{(n)}(\lambda)) = \theta^{(n)}(\lambda)$ for $\lambda>\lambda_\crit$ and $n$ large.

Let us consider some sequence $a_n\to\infty$ such that $a_n = o(n^{1/2})$ and look at $s\in [\eps n^{-1/2}, a_nn^{-1/2}]$. We will also fix $\lambda\in[\lambda_0,\lambda_1]$. We have
\begin{align*}
    H_n&(s,\lambda) = \varphi_1^{(n)}(\lambda s) - s + n^{-1/2} \Psi_1(\lambda s) + \Delta_n(\lambda s)\\
    &= \left(\lambda \E[W_n^2 e^{-W_n \zeta_{n,l}(\lambda s)}] - 1\right)s + n^{-1/2} \Psi_1(\lambda s) + \Delta_n(\lambda s)
\end{align*} Let us look at the first term on the right-hand side. Since $\zeta_{n,l}(\lambda s) \le \lambda s \le \lambda_1 a_n n^{-1/2} = o(1)$, we see that as $n\to\infty$ and uniformly for $\lambda\in[\lambda_0,\lambda_1]$ and $s\in [\eps n^{-1/2}, a_nn^{-1/2}]$ that
\begin{align*}
    \lambda \E[W_n^2 e^{-W_n \zeta_{n,l}(\lambda s)}] - 1  \longrightarrow \lambda \E[W^2] - 1.
\end{align*}
For the second term, we use the continuity of $\Psi_1$ and the fact that $\Psi_1(0) = 0$ to see
\begin{align*}
    \inf_{\lambda\le \lambda_1, s\le a_n n^{-1/2}} \Psi_1(\lambda s) = o(1).
\end{align*} 
Hence, uniformly on $\lambda\in[\lambda_0, \lambda_1]$ and $s\in [\eps n^{-1/2}, a_nn^{-1/2}]$ we have with probability 1 that 
\begin{align}\label{eqn:taylr3}
    H_n(s,\lambda) \ge \left(\lambda \E[W^2]-1 +o(1)\right)\eps n^{-1/2} + o(1) n^{-1/2} >0\textup{ for all }n\textup{ large}.
\end{align}

We now turn to the right-edge. Let us now focus on $s\in [\eps,a_n]$. We have
\begin{align*}
    H_n&\Bigg(\theta^{(n)}(\lambda) + \overbrace{\frac{\Psi_1(\lambda \theta^{(n)}(\lambda)) -s}{n^{1/2}\beta^{(n) }(\lambda)}}^{x:=}, \lambda\Bigg)\\
    &= \varphi_1^{(n)}\left(\lambda \theta^{(n)}(\lambda) + \lambda x\right) - \left(\theta^{(n)}(\lambda) +x\right)+ n^{-1/2} \Psi_1 (\lambda (\theta^{(n)}(\lambda)+x)) + \Delta_n(\lambda \theta^{(n)}(\lambda) + \lambda x).
\end{align*} 
Using \eqref{eqn:taylor2} and an argument similar to the derivation of \eqref{eqn:taylr3} we have uniformly in $s\in [\eps,a_n]$ and $\lambda\in[\lambda_0,\lambda_1]$ that
\begin{align*}
    \varphi_1^{(n)}&(\lambda \theta^{(n)} (\lambda) + \lambda x) - \left(\theta^{(n)}(\lambda)+x\right)  = \left( \lambda \E[W^2_n e^{-W_n \lambda \zeta_{n,r}(\lambda,\lambda x)}] -1\right)x\\
    &=\left(\lambda \E[W_n^2 e^{-W_n\lambda \theta^{(n)}(\lambda)}]-1+o(1)\right)x= (-\beta^{(n)}(\lambda)+o(1))x\\
    &=  n^{-1/2} (s -\Psi_1(\lambda \theta^{(n)}(\lambda)) )(1+ o(1)).
\end{align*} For the last line we used $\frac{1}{\beta^{(n)}(\lambda)} = O(1)$ as $n\to\infty$.
Hence, uniformly for $\lambda\in [\lambda_0, \lambda_1]$ and $s\in [\eps,a_n]$
\begin{align*}
   n^{1/2} H_n&\left(\theta^{(n)} + \frac{\Psi_1(\lambda \theta^{(n)}(\lambda)) -s}{\beta^{(n)}(\lambda)  n^{1/2}},\lambda\right)\\
    &= s(1+o(1)) +  \left(\Psi_1(\lambda \theta^{(n)}(\lambda) + \lambda x) - \Psi_1(\lambda \theta^{(n)}(\lambda))+o(1)\right) + \Delta_n(\lambda \theta^{(n)}(\lambda) + \lambda x)\\
    &= s(1+o(1)) + o(1) > \frac\eps2
\end{align*} for all $n$ sufficiently large. We have used $x\to 0$ as $n\to\infty$.

What remains to show is 
\begin{align*}
    \inf_{\lambda\in[\lambda_0,\lambda_1]} \inf_{s\in I_n(\lambda)} H_n(s,\lambda)>0.
\end{align*}
where $I_n(\lambda) = \{\frac{a_n}{n^{1/2}} \le s \le \theta^{(n)}(\lambda) + \frac{\Psi_1(\lambda \theta^{(n)}(\lambda)) - a_n}{\beta^{(n)}(\lambda)n^{1/2}}\}$. However, this is relatively easy. Note that 
\begin{align*}
    \inf_{\lambda\in [\lambda_0,\lambda_1]}\inf_{s\in I_n(\lambda)}\Psi_1(\lambda s)  = O(1).
\end{align*}
So, uniformly in $\lambda\in[\lambda_0,\lambda_1], s\in I_n(\lambda)$, we have
\begin{align*}
    H_n(s,\lambda)  = \varphi^{(n)}_1(\lambda s)- s + O(n^{-1/2})
\end{align*}
where the constants in the $O(n^{-1/2})$ are random but uniform over the specified collection of $\lambda,s$. The estimates in \eqref{eqn:taylor2} can easily be used to show that the above quantity is strictly positive. 
\end{proof}

The above proposition implies that for all for all $\lambda_\crit<\lambda_0\le \lambda_1$ and $\eps>0$ that a.s. under the coupling in \eqref{eqn:Hnexpand} 
\begin{equation}\label{eqn:supset}
(\g_n(\lambda),\dd_n(\lambda)) \supset \left[\eps n^{-1/2}, \theta(\lambda) + \frac{1}{\beta^{(n)}(\lambda) n^{1/2}} \left({\Psi_1(\lambda\theta^{(n)}(\lambda))} -\eps \right)\right]
\end{equation} for all $\lambda\in[\lambda_0,\lambda_1]$ and $n$ sufficiently large. This immediately yields the following.
\begin{corollary}\label{cor:Gconvto0}
For the coupling in \eqref{eqn:Hnexpand}, $n^{1/2}\g_n(\lambda)\to 0$ locally uniformly in $\lambda>\lambda_\crit$.
\end{corollary}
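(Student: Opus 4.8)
The plan is a one-step deduction from the inclusion \eqref{eqn:supset}, so there is very little to do beyond bookkeeping. First I would fix an arbitrary compact subinterval $[\lambda_0,\lambda_1]\subset(\lambda_\crit,\infty)$ and reduce the asserted locally uniform convergence to the claim that $\sup_{\lambda\in[\lambda_0,\lambda_1]}n^{1/2}\g_n(\lambda)\to 0$ almost surely under the coupling of \eqref{eqn:Hnexpand}. Then, for a fixed $\eps>0$, I would invoke \eqref{eqn:supset} (itself the stated consequence of Proposition \ref{prop:GD}): almost surely, for all $n$ sufficiently large and all $\lambda\in[\lambda_0,\lambda_1]$, the open interval $(\g_n(\lambda),\dd_n(\lambda))$ contains the point $\eps n^{-1/2}$, and hence $\g_n(\lambda)<\eps n^{-1/2}$.

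Since $\g_n(\lambda)\ge 0$ for every $\lambda$ (the longest excursion of a function that is null at zero starts at a nonnegative time), this sandwiches $n^{1/2}\g_n(\lambda)$ in $[0,\eps)$ uniformly over $\lambda\in[\lambda_0,\lambda_1]$ for all $n$ past a random, $\eps$-dependent threshold. Applying this with $\eps=1/k$ for each $k\in\N$ and intersecting the resulting countable family of almost sure events, I obtain that almost surely $\limsup_{n\to\infty}\sup_{\lambda\in[\lambda_0,\lambda_1]}n^{1/2}\g_n(\lambda)\le 1/k$ for every $k$, so the $\limsup$ equals $0$. As $[\lambda_0,\lambda_1]$ was an arbitrary compact subinterval of $(\lambda_\crit,\infty)$, this is precisely the locally uniform convergence $n^{1/2}\g_n(\lambda)\to 0$ on $(\lambda_\crit,\infty)$.

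I do not expect any genuine obstacle here: all of the analytic content is already contained in Proposition \ref{prop:GD} and in its reformulation as \eqref{eqn:supset}. The only subtlety worth flagging is the order of the quantifiers — the threshold $n$ in \eqref{eqn:supset} depends on $\eps$ and on the realization — which is exactly why one should run $\eps$ through the countable sequence $1/k$ and intersect the corresponding almost sure events before sending $k\to\infty$, rather than attempting a single $\eps\to 0$ limit directly.
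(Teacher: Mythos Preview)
Your proposal is correct and matches the paper's approach exactly: the paper simply states that \eqref{eqn:supset} ``immediately yields'' the corollary, and your argument spells out precisely this deduction, together with the standard $\eps=1/k$ countable-intersection bookkeeping that the paper leaves implicit.
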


A reverse inclusion, analogous to \eqref{eqn:supset}, requires more work. Indeed, a.s. for all $\lambda>\lambda_\crit$ function $t\mapsto H_n(t,\lambda)<0$ for all $t$ sufficiently small and so the largest excursion of $H_n(t,\lambda)$ is not an excursion above zero but some negative level (which depends on $\lambda$). However, since $H_n(t,\lambda) = X_{n,1}(\lambda t) - t \ge -t$ for all $t$ this excursion is above level $o(n^{-1/2})$. More precisely, from \eqref{eqn:supset}:

\begin{lemma}\label{lem:gnlowbound}
    Fix $\lambda_\crit<\lambda_0\le \lambda_1$ and set $T = \E[W]$. Then a.s. under the coupling in \eqref{eqn:Hnexpand}, for any $\eps>0$ we have
    \begin{equation}\nonumber
\inf_{\lambda\in[\lambda_0,\lambda_1]} H_n(\g_n(\lambda),\lambda) \ge  - \frac{\eps}{ n^{1/2}}
    \end{equation}
    for all $n$ sufficiently large.
\end{lemma}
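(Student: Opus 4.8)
The plan is to leverage the lower inclusion \eqref{eqn:supset} together with the trivial bound $H_n(t,\lambda)=X_{n,1}(\lambda t)-t\ge -t$. Fix $\lambda_\crit<\lambda_0\le\lambda_1$ and let $\eps>0$. Applying Proposition \ref{prop:GD} (equivalently, \eqref{eqn:supset}) with the parameter $\eps$ replaced by $\eps/2$, say, shows that a.s.\ for all $n$ large and all $\lambda\in[\lambda_0,\lambda_1]$ the interval $\left[\frac{\eps}{2 n^{1/2}},\,\theta^{(n)}(\lambda)+\frac{\Psi_1(\lambda\theta^{(n)}(\lambda))-\eps/2}{\beta^{(n)}(\lambda)n^{1/2}}\right]$ is contained in the longest excursion interval $(\g_n(\lambda),\dd_n(\lambda))$. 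In particular the left endpoint of the excursion satisfies $\g_n(\lambda)\le \frac{\eps}{2 n^{1/2}}$ for all such $n$ and $\lambda$.

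Now combine this with $H_n(t,\lambda)\ge -t$, which holds pointwise in $t$ and $\lambda$ since $X_{n,1}\ge 0$. Evaluating at $t=\g_n(\lambda)$ gives
\begin{align*}
    H_n(\g_n(\lambda),\lambda)\ \ge\ -\g_n(\lambda)\ \ge\ -\frac{\eps}{2n^{1/2}}\ \ge\ -\frac{\eps}{n^{1/2}}
\end{align*}
uniformly in $\lambda\in[\lambda_0,\lambda_1]$, for all $n$ sufficiently large. Taking the infimum over $\lambda\in[\lambda_0,\lambda_1]$ yields the claim. Strictly speaking one should also note that $\g_n(\lambda)\ge 0$ automatically (an excursion interval of a function null at zero starts at a nonnegative time), so the bound $-\g_n(\lambda)\ge -\eps/(2n^{1/2})$ is the only nontrivial input and it comes directly from the containment above.

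I do not anticipate a genuine obstacle here: the lemma is essentially a bookkeeping consequence of Proposition \ref{prop:GD}. The one point requiring a little care is making sure the containment \eqref{eqn:supset} is indeed uniform over the compact $\lambda$-window $[\lambda_0,\lambda_1]$ and over all large $n$ simultaneously, but this is exactly what Proposition \ref{prop:GD} provides (its conclusion is an infimum over $\lambda\in[\lambda_0,\lambda_1]$ holding a.s.\ for all $n$ large). A secondary subtlety is that the right endpoint of the interval in \eqref{eqn:supset} is random — involving $\Psi_1(\lambda\theta^{(n)}(\lambda))$ — but since we only use the left endpoint $\eps/(2n^{1/2})$, the randomness of the right endpoint is irrelevant to this estimate, and indeed one could even invoke the weaker fact that $(\g_n(\lambda),\dd_n(\lambda))$ contains a point of the form $\eps/(2n^{1/2})$ for all large $n$. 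Hence the proof is short and the only thing to write down is the two-line chain of inequalities above, preceded by the invocation of Proposition \ref{prop:GD} with $\eps/2$ in place of $\eps$.
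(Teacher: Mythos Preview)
Your proposal is correct and is exactly the argument the paper has in mind: the text immediately preceding the lemma states that it follows from \eqref{eqn:supset} together with the trivial bound $H_n(t,\lambda)=X_{n,1}(\lambda t)-t\ge -t$, which is precisely your two-line chain $H_n(\g_n(\lambda),\lambda)\ge -\g_n(\lambda)\ge -\eps n^{-1/2}$. The only cosmetic difference is that you invoke Proposition~\ref{prop:GD} with $\eps/2$ to get a little extra room, whereas the paper just uses $\eps$ directly.
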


We now turn to some quantitative control over $H_n(t,\lambda)$ near the right edge. 

\begin{lemma}\label{lem:Gnrightedge}
    For any $\lambda_\crit<\lambda_0\le \lambda_1$ and set $T = \E[W]$. Then a.s. under the coupling in \eqref{eqn:Hnexpand}, for any $\eps>0$ we have
    \begin{align*}
       \sup_{\lambda\in[\lambda_0,\lambda_1]}  H_n\left(\theta(\lambda) + \frac{1}{n^{1/2}} \frac{\Psi_1(\lambda \theta^{(n)}(\lambda))+\eps}{\beta^{(n)}(\lambda)} ,\lambda\right)\le - \frac{\eps}{2n^{1/2}}
    \end{align*} 
    for all $n$ sufficiently large.

    In particular, for $\eps>0$ and uniformly in $\lambda\in [\lambda_0,\lambda_1]$ 
 it holds that \begin{equation}\label{eqn:subset}   
   \dd_n(\lambda)\le  \inf_{s>{n^{-1/2}}} \{H_n(s,\lambda) <  n^{-1/2}\eps/2\}\le \theta^{(n)}(\lambda) + n^{-1/2}\frac{\Psi_1(\lambda\theta^{(n)}(\lambda))}{\beta^{(n)}(\lambda)} + \frac{\eps}n^{-1/2}
    \end{equation}    
    for all $n$ large.
\end{lemma}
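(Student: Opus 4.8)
The plan is to run the same Taylor-expansion argument used for the right edge in the proof of Proposition \ref{prop:GD}, but now pushing the test point \emph{past} $\dd_n(\lambda)$ rather than staying inside the excursion. First I would set
\[
x := \frac{1}{n^{1/2}}\,\frac{\Psi_1(\lambda\theta^{(n)}(\lambda)) + \eps}{\beta^{(n)}(\lambda)},
\]
and write, using \eqref{eqn:Hnexpand},
\[
H_n\big(\theta^{(n)}(\lambda)+x,\lambda\big) = \Big(\varphi_1^{(n)}(\lambda\theta^{(n)}(\lambda)+\lambda x) - \theta^{(n)}(\lambda)-x\Big) + n^{-1/2}\Psi_1\big(\lambda(\theta^{(n)}(\lambda)+x)\big) + \Delta_n(\lambda\theta^{(n)}(\lambda)+\lambda x).
\]
By \eqref{eqn:taylor2} (the mean-value form) together with $\varphi_1^{(n)}(\lambda\theta^{(n)}(\lambda))=\theta^{(n)}(\lambda)$, the bracketed term equals $\big(\lambda\E[W_n^2 e^{-W_n\lambda\zeta_{n,r}}] - 1\big)x$ with $\zeta_{n,r}$ between $\theta^{(n)}(\lambda)$ and $\theta^{(n)}(\lambda)+x$; since $x\to 0$ uniformly in $\lambda\in[\lambda_0,\lambda_1]$ and $\E[W_n^2 e^{-W_n t}]\to\E[W^2 e^{-Wt}]$ uniformly by Dini, this is $(-\beta^{(n)}(\lambda)+o(1))x = -n^{-1/2}(\Psi_1(\lambda\theta^{(n)}(\lambda))+\eps)(1+o(1))$. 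Meanwhile $n^{-1/2}\Psi_1(\lambda(\theta^{(n)}(\lambda)+x)) = n^{-1/2}\Psi_1(\lambda\theta^{(n)}(\lambda)) + o(n^{-1/2})$ by continuity of $\Psi_1$ and $x\to 0$, while $\Delta_n(\cdot) = o(n^{-1/2})$ locally uniformly. Collecting terms,
\[
n^{1/2} H_n\big(\theta^{(n)}(\lambda)+x,\lambda\big) = -\big(\Psi_1(\lambda\theta^{(n)}(\lambda))+\eps\big)(1+o(1)) + \Psi_1(\lambda\theta^{(n)}(\lambda)) + o(1) = -\eps + o(1),
\]
uniformly in $\lambda\in[\lambda_0,\lambda_1]$. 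Hence the supremum is $\le -\eps/2 + o(1) \le -\eps/2$ once $o(1)$ is shrunk below $\eps/2$ — but to be safe one should state the bound with $-\eps/2$ after absorbing the error, or simply re-run with $\eps$ replaced by, say, $3\eps/4$; I will present it so the displayed $-\eps/(2n^{1/2})$ holds for all large $n$. The uniformity over $\lambda\in[\lambda_0,\lambda_1]$ is inherited from the uniform convergences $\theta^{(n)}\to\theta$, $\varphi_1^{(n)'}\to\varphi_1'$, $\beta^{(n)}\to\beta$ (with $\beta$ bounded away from $0$ on the compact $\lambda$-interval, as shown just before Proposition \ref{prop:GD}), the uniform continuity of $\Psi_1$ on compacts, and the local uniform decay of $\Delta_n$.

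For the ``in particular'' part I would argue as follows. Combining the just-proved bound with Proposition \ref{prop:GD} (applied with the same $\eps$), the point $\theta^{(n)}(\lambda) + n^{-1/2}(\Psi_1(\lambda\theta^{(n)}(\lambda))+\eps)/\beta^{(n)}(\lambda)$ lies to the right of $\dd_n(\lambda)$: indeed by \eqref{eqn:supset} the interval $[\eps n^{-1/2},\ \theta^{(n)}(\lambda)+n^{-1/2}(\Psi_1(\lambda\theta^{(n)}(\lambda))-\eps)/\beta^{(n)}(\lambda)]$ is contained in the excursion $(\g_n(\lambda),\dd_n(\lambda))$, so $H_n>0$ there, while at the larger point $\theta^{(n)}(\lambda)+x$ we have $H_n \le -\eps/(2n^{1/2}) < n^{-1/2}\eps/2$. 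Since $H_n(\cdot,\lambda)$ has no negative jumps and $H_n$ is positive throughout the excursion up to $\dd_n(\lambda)-$, the first time after $n^{-1/2}$ that $H_n(s,\lambda)$ drops below $n^{-1/2}\eps/2$ occurs no later than $\theta^{(n)}(\lambda)+x$, and in particular $\dd_n(\lambda)\le\theta^{(n)}(\lambda)+x$; this is exactly the chain of inequalities in \eqref{eqn:subset} (after rewriting $x$). All estimates being uniform in $\lambda\in[\lambda_0,\lambda_1]$, the conclusion holds uniformly.

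The main obstacle — really the only delicate point — is bookkeeping the three sources of error ($\Delta_n$, the difference $\Psi_1(\lambda(\theta^{(n)}(\lambda)+x)) - \Psi_1(\lambda\theta^{(n)}(\lambda))$, and the gap between $\E[W_n^2 e^{-W_n\lambda\zeta_{n,r}}]$ and $\E[W_n^2 e^{-W_n\lambda\theta^{(n)}(\lambda)}]=1-\beta^{(n)}(\lambda)/\lambda$) \emph{uniformly} over the compact $\lambda$-band while the displacement $x$ is itself a random, $\lambda$-dependent quantity of order $n^{-1/2}$. This is handled exactly as in Proposition \ref{prop:GD}: $x = O(n^{-1/2})$ uniformly because $\Psi_1$ is bounded on $\{\lambda\theta(\lambda):\lambda\in[\lambda_0,\lambda_1]\}$ (a compact set, by continuity of $\theta$) and $1/\beta^{(n)}(\lambda) = O(1)$ uniformly there; once $x\to 0$ uniformly, each error term is $o(n^{-1/2})$ uniformly, and the argument closes. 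No genuinely new idea beyond Proposition \ref{prop:GD} is needed — this lemma is the ``reverse inclusion'' companion to \eqref{eqn:supset}.
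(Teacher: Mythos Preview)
Your proposal is correct and follows essentially the same route as the paper's proof: define the same displacement $x$, apply the mean-value expansion \eqref{eqn:taylor2} at $\theta^{(n)}(\lambda)$, use continuity of $\Psi_1$ together with $\Delta_n = o(n^{-1/2})$, and collect terms to get $n^{1/2}H_n(\theta^{(n)}(\lambda)+x,\lambda) = -\eps + o(1)$ uniformly. One small correction for the ``in particular'' part: your line ``$H_n$ is positive throughout the excursion up to $\dd_n(\lambda)-$'' is not literally true, since the excursion is above the level $H_n(\g_n(\lambda)-,\lambda)$, which may be slightly negative; this is exactly where Lemma~\ref{lem:gnlowbound} enters (as the paper notes), guaranteeing that this level exceeds $-\eps/(2n^{1/2})$ so that the test point indeed lies beyond $\dd_n(\lambda)$.
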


\begin{proof} The second statement follows from the first, Proposition \ref{prop:GD} and Lemma \ref{lem:gnlowbound}.
Recall the Taylor expansions near $\theta^{(n)}(\lambda)$ in \eqref{eqn:taylor2} above. Using that, 
\begin{align}
 \nonumber H_n&\bigg(\theta^{(n)}(\lambda) \overbrace{+ \frac{1}{n^{1/2}}\frac{ \Psi(\lambda \theta^{(n)}(\lambda))+\eps}{\beta^{(n)}(\lambda)}}^{x:=},\lambda\bigg)\\
\nonumber &=-(1+o(1))\beta^{(n)}(\lambda)x + n^{-1/2}\Psi_1(\lambda\theta^{(n)}(\lambda) + \lambda x) + \frac{o(1)}{n^{1/2}}
 \\
\nonumber  &= -\frac{\eps(1+o(1))}{n^{1/2}} - n^{-1/2} \Psi_1(\lambda\theta^{(n)}) (1+o(1)) + \Psi_1(\lambda\theta^{(n)}(\lambda) + \lambda x)  + \frac{o(1)}{n^{1/2}}  \\
\label{eqn:helper1}&= -\frac{\eps}{n^{1/2}}+ \frac{o(1)}{n^{1/2}} + n^{-1/2} \left(\Psi_1(\lambda \theta^{(n)} + \lambda x) - \Psi_1(\lambda \theta^{(n)}(\lambda))\right)\le \frac{\eps}{2n^{1/2}}
\end{align} for all large $n$. In \eqref{eqn:helper1}, we used uniform continuity of $\Psi_1$ over compact intervals and the fact that $x\to 0$. The result follows.
\end{proof}

We can now finish the proof of Theorem \ref{thm:DGconv}. 
\begin{proof}[Proof of Theorem \ref{thm:DGconv}]
 We work with the a.s. coupling in \eqref{eqn:Hnexpand}. By Corollary \ref{cor:Gconvto0}, $n^{1/2}\g_n\to 0$ locally uniformly. Using \eqref{eqn:supset} and \eqref{eqn:subset}, for all $\eps>0$ and all $n$ large 
  \begin{equation*}
      \frac{\Psi_1(\lambda\theta^{(n)}(\lambda))}{\beta^{(n)}(\lambda)} - \eps \overset{\eqref{eqn:supset}}{\le} n^{1/2}\left(\dd_n(\lambda)-\theta^{(n)}(\lambda)\right) \overset{\eqref{eqn:subset}}{\le}  \frac{\Psi_1(\lambda\theta^{(n)}(\lambda))}{\beta^{(n)}(\lambda)} +\eps
  \end{equation*} for all $\lambda\in[\lambda_0,\lambda_1]$. The result follows from the uniform convergence of $\theta^{(n)}\to\theta$ and $\beta^{(n)}\to \beta$.
\end{proof}

\end{document}